\def\0D{\Delta^{(0)}}
\def\1D{\Delta^{(1)}}
\newcommand{\Hc}{\mathcal{H}}
\newcommand{\Cc}{\mathcal{C}}
\newcommand{\Mc}{\mathcal{M}}
\newcommand{\Zc}{\mathcal{Z}}
\newcommand{\Fc}{\mathcal{F}}
\newcommand{\Oc}{\mathcal{O}}
\newcommand{\hmod}{{_H\mathcal{M}}}
\newcommand{\md}{\mathbb{M}\text{d}}
\newcommand{\Dc}{\mathcal{D}}
\newcommand{\Nc}{\mathcal{N}}
\newcommand{\Ec}{\mathcal{E}}
\newtheorem{theorem}{Theorem}[section]
\newtheorem{remark}[theorem]{Remark}
\newtheorem{proposition}[theorem]{Proposition}
\newtheorem{lemma}[theorem]{Lemma}
\newtheorem{example}[theorem]{Example}
\newtheorem{definition}[theorem]{Definition}
\def\build#1_#2^#3{\mathrel{\mathop{\kern 0pt#1}\limits_{#2}^{#3}}}
\newcommand{\ps}[1]{~\hspace{-3pt}^{^{\left(#1\right)}}}
\newcommand{\vect}{\text{Vec}}
\newcommand{\id}{1}
\newcommand{\bimod}{\mathbb{B}\text{md}}
\newcommand{\Hm}{\mathcal{H}om}
\newcommand{\la}{\triangleright}
\newcommand{\ra}{\triangleleft}
\newcommand{\Bc}{\mathcal{B}}
\newcommand{\dF}{{^*F}}
\numberwithin{equation}{section}
\def\ot{\otimes}
\def\part{\partial}
\def\ot{\otimes}
\def\Hom{\mathop{\rm Hom}\nolimits}
\def\build#1_#2^#3{\mathrel{
\mathop{\kern 0pt#1}\limits_{#2}^{#3}}}
\numberwithin{equation}{section}
\newcommand{\comment}[1]{\relax}
\begin{document}
\title{Some invariance properties of cyclic cohomology with coefficients.}
\author {Ilya Shapiro}
\date{
}
\maketitle
\begin{abstract}
In this paper, we further explore the conceptual approach to cyclic cohomology with coefficients initiated in \cite{hks}.  In particular we give a derived version of the definition with better invariance properties.  We show that the new definition agrees with the old under certain conditions and we prove, for the new definition, both its Morita invariance and its $2$-Morita invariance, under suitable interpretations of these terms. More generally, we prove a version of Shapiro's lemma for cyclic cohomology with coefficients.

\end{abstract}

\medskip

{\it 2010 Mathematics Subject Classification.} Monoidal category (18D10), abelian and additive category (18E05), cyclic homology (19D55), Hopf algebras	(16T05).

\section{Introduction}
Since its introduction independently by Boris Tsygan and Alain Connes in the 1980s cyclic (co)homology has branched out in many directions and is currently the subject of vigorous research.  The branch that we contribute to in this paper follows the development of an ``equivariant" version of cyclic cohomology that began with the work of Connes-Moscovici and was generalized into Hopf-cyclic cohomology of Hajac-Khalkhali-Rangipour-Sommerh\"{a}user.  We further generalize this development in order to better understand their work.  Our methods use the categorical, $2$-categorical and even $3$-categorical way of thinking and so we would like to point out that such approaches to cyclic theory were made before.  In particular we may cite \cite{catdual, catex, catcoeff, catadj, catmackey} as some examples where this categorical approach resulted in a better understanding and various generalizations.  What we pursue here was motivated in part by  the author's prior joint work \cite{hks} that considered cyclic cohomology in monoidal categories from the point of view of a categorical notion of a trace.  The main point of the present paper is Theorem \ref{sl}; it relates cyclic cohomologies between possibly very different looking monoidal categories.

More precisely, it was shown in  \cite{hks} how to assign to a unital associative algebra $A$ in a monoidal category $\Cc$ equipped with a $2$-contratrace a cocyclic object.  Furthermore, it turned out that in the case of $\Cc=\hmod$, the category of left modules over a Hopf algebra $H$, the method recovered the Hopf cyclic cohomology, type $A$, with coefficients; other types were explored as well.  In this paper we consider the situation of a unital associative algebra $A$ in a monoidal category $\Cc$.  This recovers not only the type $A$ theory with $\Cc=\hmod$, but also the type $B$ theory, here $\Cc$ is the category of right $H$-comodules $\Mc^H$.  For purposes of brevity we do not deal with coalgebras, except  briefly in Section \ref{coalgebrasection} where we need them to more tightly tie together the categories of Hopf modules and Hopf comodules.

Whereas \cite{hks} was concerned with giving a general definition, for monoidal categories, of cyclic cohomology with coefficients that reduces to the Hopf-cyclic cohomology for Hopf algebras, here we pursue a different goal.   We change the definition of cyclic cohomology in monoidal categories to a more homologically pleasing derived version.  This change does not impact the usual cyclic cohomology, nor more generally cohomology in monoidal categories with a projective identity, see Proposition \ref{oldvsnew}.  It does change the existing Hopf-cyclic cohomology in some non semi-simple examples of Hopf algebras (see Remark \ref{diff}).  Some of the arguments for the new definition are Lemma \ref{derint} and of course the main result: Theorem \ref{sl}, that fails for the old definition in general.  The new definition is based on a (different from \cite{hks}) construction of precocyclic objects in Section \ref{precocyclic}.   The  price of invariance of our new definition is that we no longer construct cocyclic objects; we deal instead with precocyclic objects.  However precocyclic structure is sufficient for both Hochschild and cyclic cohomology \cite{loday}.

The setting is that of abelian monoidal categories, such as those of modules or comodules over Hopf algebras, or as a very different flavored example: left modules over a commutative algebra.  The original setting of cyclic cohomology was the category of vector spaces $\vect$, and so no non-trivial coefficients were possible.  Hopf cyclic cohomology with coefficients \cite{cm, hkrs} takes place in the categories of modules or comodules over Hopf algebras, where the coefficients are not only possible, they are required.  More general situations do fit into the framework, for example modules over weak Hopf algebras; the latter is dealt with tangentially in this text. The example that started this line of investigation, and we feel explains both the ``equivariant" nature of Hopf-cyclic cohomology and its ability to account for local system coefficients,  concerns only the category of representations of a finite group $G$, it can be found in Section \ref{derhamsection}.

When we talk about duality we mean it in the sense of \cite{EGNO}; with the important distinction that we must assume that all of our module categories arise from algebras.  Thus if $\Cc$ is a monoidal category, the only types of right $\Cc$-module categories that we consider are of the form: left $A$-modules in $\Cc$ for $A$ a unital associative algebra in $\Cc$. Of course this means that the only duals are of the form: $A$-bimodules in $\Cc$, but thinking of them as duals makes explicit the Morita invariance of all that we consider.  Note that if one considers non-unital associative algebras as a special case of unital associative algebras, namely those of the form $\id\oplus A$ then the results of this paper apply to them as well.

Thinking about duality led us to the realization that cyclic cohomology should in fact be invariant under $2$-Morita equivalence, since it is duality that supplies the examples.  More precisely, under conjectured Morita invariance we expect to have an isomorphism on cyclic cohomology, with a fixed coefficient object, between two unital associative algebras in $\Cc$ that are Morita equivalent (in particular, their module categories are equivalent).  This is proven in Proposition \ref{cyclicismoritainvar}.  But one furthermore conjectures that if two monoidal categories $\Cc$ and $\Dc$ are $2$-Morita equivalent, in particular their right module $2$-categories are equivalent (modulo correctly chosen definitions), then there should be a way to transfer algebras and coefficient objects between the two categories that would result in isomorphic cyclic cohomologies.  This is explained in Section \ref{2moritainvariance}.  Even more generally, we have a type of adjunction on cyclic cohomology when two monoidal categories are not $2$-Morita equivalent, rather there is a suitable $2$-functor between their module $2$-categories or viewed differently a $1$-morphism from one to the other in a suitable $3$-category, see Section \ref{adjunctionsection} and Theorem \ref{sl} in particular.

The correct notion of the $2$-category of right $\Cc$-modules is $\bimod_\Cc$ whose objects are unital associative algebras in $\Cc$, $1$-morphisms are bimodules, and $2$-morphisms are bimodule homomorphisms.  While in general not every right $\Cc$-module category arises from algebras, this is a good replacement for that notion.  Thinking about $\bimod_\Cc$ in these terms is helpful in explaining some of the constructions encountered in this paper.

We reiterate that the goal of this paper is not only to explore this general setting so that it can accommodate notions more general than Hopf algebras, but also to better understand the already well studied objects and as yet unexplored relationships between them.  For example, in Section \ref{adjactionsection} we demystify the notion of contramodule coefficients, while in Section \ref{natadj} we explore the close relationship between the type $A$ and type $B$ Hopf-cyclic cohomology theories.

Some technical details: we do not assume that our monoidal categories are strict, but we do suppress the explicit writing out of the associativity constraints.  All categories are abelian, furthermore they are $k$-linear for a field $k$ of characteristic $0$ and so are all the functors that we consider. The $\ot$ is right exact in both variables and distributes over $\oplus$.  Monoidal functors are strongly monoidal.  Semisimplicity is not assumed, in fact it would make everything uninteresting, unless we quickly give it up.  We do assume that $\Cc$ has enough projective objects; this is because we are dealing with algebras, coalgebras would flip all the requirements.

\bigskip

\textbf{Acknowledgments}: The author wishes to thank Masoud Khalkhali for stimulating questions, encouragement, and pointing out many useful references. Furthermore, thanks are due to Piotr Hajac, Ivan Kobyzev, Georgy Sharygin, and Serkan S\"{u}tl\"{u} for illuminating discussions. This research was supported in part by the NSERC Discovery Grant number 406709.

\bigskip

\section{A review of symmetric $2$-contratraces}\label{review}
Let us recall one of the key notions of \cite{hks}.  The exposition below differs from the one in \cite{hks} in its focus on the notion of a $2$-contratrace instead of that of a $2$-trace.

For a monoidal category $\Cc$ let $$\Cc^*=Fun(\Cc^{op},\vect)^{op}$$ and note that $\Cc^*$ is a $\Cc$-bimodule category.  More precisely, $$c\cdot \Fc\cdot d(-)=\Fc(d\ot -\ot c).$$ The $(-)^{op}$ denotes the opposite category of the underlying abelian category; the first $op$ ensures that our functors are contravariant, so that we get cohomology from algebras, while the second $op$ makes the arrows align properly with respect to the actions.

Consider now the center of the bimodule category $\Cc^*$ which we denote by $\Zc_\Cc\Cc^*$.  Roughly speaking it consists of contravariant functors $\Fc$ such that they come equipped with bifunctorial isomorphisms $$\tau_{c,d}:\Fc(c\ot d)\rightarrow \Fc(d\ot c).$$  We called such objects $2$-contratraces.  Furthermore, we say that $\Fc$ is a symmetric $2$-contratrace if $$\tau_{d,c}\circ\tau_{c,d}=Id_{\Fc(c\ot d)}$$ and we denote that subcategory of the center by $\Zc'_\Cc\Cc^*$, called the restricted center.

Recall that if $\Fc$ is a symmetric $2$-contratrace then for a unital associative algebra $A$ in $\Cc$ we have that $$C^n=\Fc(A^{\ot n+1})$$ is a cocyclic object with $\tau$ obtained from $\tau_{A,A^{\ot n}}$, cofaces from the multiplication in $A$, and codegeneracies from the inclusion of the unit in $A$.  The following is the definition used in \cite{hks}.

\begin{definition}
Let $A$ be a unital associative algebra in $\Cc$ and $\Fc$ a symmetric $2$-contratrace, then if $C^n=\Fc(A^{\ot n+1})$ is the cocyclic object described above, we defined $$HC^n_{old}(A,\Fc)=HC^n(C^\bullet).$$
\end{definition}

In \cite{hks} it is shown that when $\Cc=\hmod$, the category of left modules over a Hopf algebra $H$ and $M\in{_H SAYD^H}$, i.e., $M$ is a left-right stable anti-Yetter-Drinfeld module, then $$\Hom_H(M\ot -, \id)$$ is an example of a symmetric $2$-contratrace that recovers the type $A$ theory of \cite{hkrs}.  One can show that similarly, $$\Hom(-,M)^H$$ is a symmetric $2$-contratrace that recovers the type $B$ theory of \cite{hkrs} when applied to the category $\Cc=\Mc^H$ of right $H$-comodules.

\section{A construction of precocyclic objects}\label{precocyclic}
Let $\Dc$ be a monoidal category, and $\Fc$ be an element in $\Zc'_{\Dc}(\Dc^{*})$, i.e., $\Fc$ is a symmetric $2$-contratrace on $\Dc$.  Let $d:P\rightarrow \id$ be given.

\begin{definition}\label{precocyclicstructure}
Let $$C^n=C^n(d,\Fc)=\Fc(P^{\ot n+1}).$$

For $i=0, \cdots, n+1$ let $\delta_i': P^{\ot n+2}\rightarrow P^{\ot n+1}$ be given by $$\delta_i'=Id^{\ot i}_P\ot d\ot Id_P^{\ot n+1-i},$$ and let $$\delta_i=\Fc(\delta_i'): C^n\rightarrow C^{n+1}.$$

Define $\tau_n\in Aut(C^n)$ via $$\tau_n=\tau_{P,P^{\ot n}}.$$
\end{definition}

\begin{remark}\rm{
Note that in the above definitions of cofaces, there is no ``flipover coface", i.e., a special $\delta_n$ that is not like the others in the sense that its definition utilizes $\tau$ itself.  The reason for this will become clear later.
}\end{remark}

\begin{proposition}
The structure on $C^n$ defined above satisfies the relations:
\begin{align*}\delta_j\delta_i&=\delta_i\delta_{j-1},\, 0\leq i<j\leq n+1\\\tau_{n+1}\delta_i&=\delta_{i-1}\tau_n,\, i\geq 1\\ \tau_{n+1}\delta_0&=\delta_{n+1},\\\tau^{n+1}_n&=Id,\end{align*}
and thus makes $C^n$ into a precocyclic object.
\end{proposition}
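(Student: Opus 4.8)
The plan is to verify the four displayed relations one at a time, separating the three ``coface'' relations, which are formal consequences of the functoriality of $\Fc$ and the (bi)naturality of the half-braiding $\tau$, from the cyclicity relation $\tau_n^{n+1}=\Id$, which is the only one that genuinely uses the coherence (hexagon) of $\tau$ together with the symmetry hypothesis $\Fc\in\Zc'_\Dc(\Dc^*)$. Throughout I suppress associativity and unit constraints, as in the paper, and I use that $\Fc$ is contravariant, so that $\delta_j\circ\delta_i=\Fc(\delta_j')\circ\Fc(\delta_i')=\Fc(\delta_i'\circ\delta_j')$.

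First, the cosimplicial identity $\delta_j\delta_i=\delta_i\delta_{j-1}$ for $0\le i<j\le n+1$. By the previous remark it suffices to prove the dual presimplicial identity $\delta_i'\circ\delta_j'=\delta_{j-1}'\circ\delta_i'$ at the level of the objects $P^{\ot\bullet}$. Since each $\delta_k'$ merely applies $d\colon P\to\id$ in one tensor slot, and $i<j$ places these two applications in disjoint slots, the identity is exactly the interchange law for $\ot$ (accounting for the index shift caused by the earlier application); applying the contravariant $\Fc$ then yields the relation.

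Next the two mixed relations, each a one-square diagram chase. For $i\ge1$ I would write $\delta_i'=\Id_P\ot g$ with $g=\delta_{i-1}'\colon P^{\ot n+1}\to P^{\ot n}$ the corresponding face at one lower level, and feed $g$ into the naturality square of the bifunctorial family $\tau_{P,-}$ in its second variable; reading off the square gives precisely $\tau_{n+1}\delta_i=\delta_{i-1}\tau_n$. For $i=0$ I would instead use $\delta_0'=d\ot\Id$ and the naturality of $\tau_{-,P^{\ot n+1}}$ in its first variable with respect to $d\colon P\to\id$; combined with the unit coherence $\tau_{\id,-}=\Id$ this collapses to $\tau_{n+1}\delta_0=\delta_{n+1}$, which is exactly why there need be no special ``flip-over'' coface.

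The crux is $\tau_n^{n+1}=\Id$. Here I would exploit the coherence of $\tau$ as a half-braiding in the center of the bimodule category $\Dc^*$: the hexagon expresses $\tau_n=\tau_{P,P^{\ot n}}$, which moves the first tensor factor past the remaining $n$ factors at once, as the composite of the $n$ elementary moves carrying that factor past one neighbor at a time, each such move being $\tau_{P,P}$ with the other factors as spectators via the bimodule action. Writing $s_i$ for the swap of the $i$-th and $(i+1)$-th factors, one obtains $\tau_n=s_{n-1}\cdots s_1 s_0$, an expression of an $(n+1)$-cycle in $S_{n+1}$ as a product of adjacent transpositions. The remaining task, and the main obstacle, is to check that the $s_i$ assemble into an honest action of $S_{n+1}$, i.e. that they satisfy the Coxeter relations: $s_i^2=\Id$ comes from the symmetry hypothesis $\tau_{d,c}\circ\tau_{c,d}=\Id$ specialized to $c=d=P$; the commutation $s_is_j=s_js_i$ for $|i-j|\ge2$ is the interchange law for disjoint slots; and the braid relation $s_is_{i+1}s_i=s_{i+1}s_is_{i+1}$ is the Yang--Baxter identity, which follows from the hexagon for $\tau$. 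Granting these, $\tau_n^{n+1}$ is the image of an $(n+1)$-cycle raised to the power $n+1$, hence the identity. I expect the bookkeeping of spectators in the braid relation to be the most delicate point; the base cases $n=0$, where $\tau_0=\tau_{P,\id}=\Id$, and $n=1$, where $\tau_1^2=\Id$ directly from symmetry, are immediate and serve as a sanity check.
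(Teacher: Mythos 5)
Your treatment of the first three relations matches the paper's: the cosimplicial identity is the interchange law for $\ot$ applied in disjoint slots to give $\delta_i'\delta_j'=\delta_{j-1}'\delta_i'$, followed by the contravariant $\Fc$; and the two mixed relations are exactly the naturality squares of the bifunctorial family $\tau_{c,d}$, with $\tau_{\id,-}=\Id$ handling the $i=0$ case. Those parts are fine and are essentially the paper's argument.

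The gap is in $\tau_n^{n+1}=\Id$: the elementary swaps $s_i$ you want to use do not exist in this setting. A symmetric $2$-contratrace is an object of the restricted center of the $\Dc$-\emph{bimodule} category $\Dc^*$; the datum is a family of isomorphisms $\tau_{c,d}\colon\Fc(c\ot d)\to\Fc(d\ot c)$ between \emph{values of the functor}, coherent (multiplicative) in the variable $c$. This produces cyclic rotations of a tensor word and nothing more. There is no induced automorphism of $\Fc(a\ot P\ot P\ot b)$ transposing two adjacent middle factors ``with spectators'': that would require a genuine morphism $P\ot P\to P\ot P$ in $\Dc$ coming from a braiding or a half-braiding on $\Dc$ itself, which is not part of the hypotheses. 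Likewise $\tau_{c,d\ot e}$ does not factor through intermediate stages such as $\Fc(d\ot c\ot e)$, so $\tau_n$ cannot be written as a product of ``one-neighbor'' moves. Consequently the subgroup of $\Aut(\Fc(P^{\ot n+1}))$ you can generate is cyclic, not symmetric, and the reduction to Coxeter relations cannot be carried out. The correct (and much shorter) argument is the one the paper gives: coherence in the first variable yields $\tau_n^{k}=\tau_{P^{\ot k},P^{\ot n+1-k}}$, in particular $\tau_n^{n}=\tau_{P^{\ot n},P}$, and then the symmetry axiom $\tau_{d,c}\circ\tau_{c,d}=\Id$ with $c=P$, $d=P^{\ot n}$ gives $\tau_n^{n+1}=\tau_{P^{\ot n},P}\circ\tau_{P,P^{\ot n}}=\Id$. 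Your base cases $n=0,1$ go through only because there the rotation happens to coincide with a trivial or single swap.
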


\begin{proof}
Let $i<j$, then \begin{align*}\delta_i'\delta_j'&=Id^{\ot i}_P\ot d\ot Id_P^{\ot n-i}\circ Id^{\ot j}_P\ot d\ot Id_P^{\ot n+1-j}\\&=Id^{\ot i}_P\ot d\ot Id_P^{\ot j-i-1}\ot d\ot Id_P^{\ot n+1-j}\\&=Id^{\ot j-1}_P\ot d\ot Id_P^{\ot n+1-j}\circ Id^{\ot i}_P\ot d\ot Id_P^{\ot n+1-i}\\&=\delta_{j-1}'\delta_i'.\end{align*}  So after applying $\Fc$ we deduce the satisfaction of the axioms for a precosimpicial structure.

On the other hand $\tau$ is obtained via a bifunctorial isomorphism $\tau_{c,d}:\Fc(c\ot d)\rightarrow\Fc(d\ot c)$, and so we have commutative squares: for $i\geq 1$ $$\xymatrix{
\Fc(P\ot P^{\ot n+1})\ar[r]^{\tau_{n+1}}&\Fc(P^{\ot n+1}\ot P)\\
\Fc(P\ot P^{\ot n})\ar[r]^{\tau_{n}}\ar[u]^{\delta_i=\Fc(Id_P\ot\delta_{i-1}')}&\Fc(P^{\ot n}\ot P)\ar[u]_{\Fc(\delta_{i-1}'\ot Id_P)=\delta_{i-1}}
}$$ and for $i=0$ $$\xymatrix{
\Fc(P\ot P^{\ot n+1})\ar[r]^{\tau_{n+1}}&\Fc(P^{\ot n+1}\ot P)\\
\Fc(\id\ot P^{\ot n+1})\ar[r]^{Id}\ar[u]^{\delta_0=\Fc(d\ot Id_P^{\ot n+1})}&\Fc(P^{\ot n+1}\ot \id)\ar[u]_{\Fc(Id_P^{\ot n+1}\ot d)=\delta_{n+1}}.
}$$

Furthermore, on $\Fc(P^{\ot n+1})$ we observe that $\tau_n^{n+1}=\tau_n^n\circ\tau_n=\tau_{P^{\ot n}, P}\circ\tau_{P, P^{\ot n}}= Id$.
\end{proof}

We note that given a $d_Q:Q\rightarrow \id$ and a commutative triangle $$\xymatrix{
P\ar[rr]^f\ar[rd]_d&&Q\ar[ld]^{d_Q}\\&\id&
}$$ we have an induced map of precocyclic objects $$f^*: C^n(d_Q,\Fc)\rightarrow C^n(d,\Fc).$$  In fact $\Fc$ yields a contravariant functor from $\Dc/\id$ to the category of precocyclic objects.  Compare this with \cite{hks}, which obtains from $\Fc$ a contravariant functor from the category of unital associative algebras in $\Dc$ to the category of cocyclic objects. Of course had we dropped the unitality assumption, we would have landed in precocyclic objects.
\begin{remark}\rm{
Though less direct, we may have used the more complicated methods of \cite{hks} to obtain a precocyclic object from $d:P\rightarrow\id$.  Namely, define an associative algebra structure on $P$ via $$m:P\ot P\xrightarrow{d\ot Id}\id\ot P\simeq P,$$ then $\Fc(P^{\ot n+1})$ acquires a precocyclic structure as before.  Of course the structure is the same either way.
}\end{remark}

\subsection{The dependence on the choice of pair}\label{dependence}
How much does $HC^n(C^\bullet(d,\Fc))$ depend on $(P,d)$?  When  $(P,d)$ is chosen properly not much.

\begin{definition}
We say that a pair $(P,d)$ with  $$d: P\rightarrow \id$$ in $\Dc$ is admissible if $$P^{\ot 2}\xrightarrow{d\ot Id-Id\ot d}P\xrightarrow{d}\id\rightarrow 0$$ is exact and $P$ is projective.
\end{definition}

We need a Lemma.

\begin{lemma}\label{homotopy} Suppose we have the following commutative diagram:
$$\xymatrix{
P\ar[dr]_d\ar@<-.5ex>[rr]_f \ar@<.5ex>[rr]^g& & Q\ar[dl]^d\\
&\id &
}$$ and form presimplicial objects $C_n(P)=P^{\ot n+1}$ and $C_n(Q)=Q^{\ot n+1}$ with $d_i=\delta'_i$s as before.  Then we have maps $g$ and $f$ of presimplicial objects $$\xymatrix{P^{\ot n+1}\ar@<-.5ex>[r]_{f^{\ot n+1}} \ar@<.5ex>[r]^{g^{\ot n+1}}&Q^{\ot n+1}}.$$ Now pass to the associated chain complexes ($d=\sum_{i=0}^n(-1)^i d_i$) and suppose that we have a commutative diagram:$$\xymatrix{& & Q^{\ot 2}\ar[d]^{d}\\P\ar[urr]^h\ar[rr]^{g-f}& & Q}$$  then we obtain a chain homotopy between $g$ and $f$ given by $$H=\sum_{i=0}^n (-1)^i f^{\ot i}\ot h\ot g^{\ot n-i}.$$
\end{lemma}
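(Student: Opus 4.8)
The statement is the algebraic prism (Eilenberg--Zilber style) homotopy, and I would prove it by a direct but carefully bookkept computation of $\partial H+H\partial$. First I would fix notation: write $s_i=f^{\ot i}\ot h\ot g^{\ot n-i}\colon P^{\ot n+1}\to Q^{\ot n+2}$ so that $H=\sum_{i=0}^n(-1)^i s_i$ raises degree by one, and let $\partial=\sum_j(-1)^j d_j$ denote the differential on each of $C_\bullet(P)$ and $C_\bullet(Q)$, with the faces $d_j^P,d_j^Q$ being the $\delta_j'$ of Definition \ref{precocyclicstructure}. I would then extract the two structural inputs supplied by the hypotheses. From the commutative triangle one reads off $d\circ f=d=d\circ g$ (the augmentation on $Q$ pulls back along both $f$ and $g$ to that on $P$); this is precisely what makes $f^{\ot n+1}$ and $g^{\ot n+1}$ maps of presimplicial, hence chain, objects. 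From the second commutative diagram one reads off the key identity $(d_0^Q-d_1^Q)\circ h=g-f$ as maps $P\to Q$.

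The heart of the computation is evaluating $d_j^Q s_i$ according to where the face $d_j^Q$ lands relative to the two $Q$-factors produced by $h$, which occupy the slots $i$ and $i+1$ of $Q^{\ot n+2}$. There are four cases. For $j<i$ the face hits an $f$-leg, and using $d\circ f=d$ I get $d_j^Q s_i=s_{i-1}\,d_j^P$ (the running index of $s$ drops by one because the contracted slot precedes the $h$-legs); for $j>i+1$ it hits a $g$-leg, and using $d\circ g=d$ I get $d_j^Q s_i=s_i\,d_{j-1}^P$. These two ``outer'' families are exactly the terms appearing in $H\partial=\sum_{i,k}(-1)^{i+k}s_i d_k^P$: reindexing by $k=j$ in the first family and $k=j-1$ in the second, and tracking the prefactor $(-1)^i$, shows that the outer part of $\partial H$ equals $-H\partial$, so these contributions cancel. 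For the two ``inner'' cases $j=i$ and $j=i+1$ the face differentiates the two legs of $h$, and the hypothesis on $h$ gives
$$d_i^Q s_i-d_{i+1}^Q s_i=f^{\ot i}\ot\big((d_0^Q-d_1^Q)\circ h\big)\ot g^{\ot n-i}=f^{\ot i}\ot(g-f)\ot g^{\ot n-i}.$$

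It then remains to sum the inner contribution over $i$. Writing $a_i=f^{\ot i}\ot g^{\ot n+1-i}$, the inner terms telescope as $\sum_{i=0}^n(a_i-a_{i+1})=a_0-a_{n+1}=g^{\ot n+1}-f^{\ot n+1}$. Combining the cancellation of the outer part with this telescoping yields $\partial H+H\partial=g^{\ot n+1}-f^{\ot n+1}$, establishing the homotopy. The only delicate point, and the one I would write out most carefully, is the sign and index bookkeeping in the outer cancellation: one must verify that applying $d$ on an $f$-leg versus a $g$-leg shifts the running index of $s_i$ in the two opposite ways described above, and that after the substitutions $k=j$ and $k=j-1$ the leftover signs reassemble into $-\sum_i(-1)^i s_i\,\partial_P=-H\partial$. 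Everything else is formal, relying only on $\ot$ being a bifunctor so that the various $d$-, $f$-, $g$- and $h$-tensor factors may be composed slotwise.
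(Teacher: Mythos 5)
Your proof is correct, and it takes a genuinely different (though ultimately equivalent) route from the paper's. The paper argues by induction on the number of tensor factors, using element notation (explicitly flagged as an abuse) together with the two ``Leibniz rules'' $d(\alpha\ot\beta)=d\alpha\ot\beta+(-1)^{\overline{\alpha}}\alpha\ot d\beta$ and $H(\alpha\ot\beta)=H\alpha\ot g\beta+(-1)^{\overline{\alpha}}f\alpha\ot H\beta$; the base case is the hypothesis $dh=g-f$ on $P$ itself, and the inductive step is a four-line cancellation. You instead expand $\partial H+H\partial$ in full, classify each composite $d_j^Q s_i$ by the position of the face relative to the two $h$-slots, cancel the outer families against $-H\partial$ after reindexing, and telescope the inner contributions $f^{\ot i}\ot(g-f)\ot g^{\ot n-i}$. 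Your bookkeeping checks out: the sign analysis in the outer cancellation is right (both families pick up a factor $-1$ under the substitutions $k=j$ and $k=j-1$, and the $i=n$ term of the second family is empty, so the union of ranges reassembles exactly $-H\partial$), and the inner telescoping gives $g^{\ot n+1}-f^{\ot n+1}$, matching the paper's conclusion. What each approach buys: the paper's induction is shorter on the page and isolates the structural content (that $H$ is an $(f,g)$-twisted derivation), but leans on the element-notation device, which in an abstract abelian monoidal category requires a word of justification; your direct expansion works purely with slotwise composition of morphisms and bifunctoriality of $\ot$, so it needs no such device, at the cost of heavier index management.
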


\begin{proof}
Let us abuse notation and pretend that our objects have elements; this simplifies notation but is not an actual assumption. Note that we have $$d(\alpha\ot\beta)=d(\alpha)\ot\beta+(-1)^{\overline{\alpha}} \alpha\ot d(\beta)$$ and $$H(\alpha\ot\beta)=H(\alpha)\ot g(\beta)+(-1)^{\overline{\alpha}} f(\alpha)\ot H(\beta).$$  We proceed by induction, on $P$ we have $dH+Hd=dh=g-f$.  Now assume that $(dH+Hd)\alpha=g\alpha-f\alpha$ and so \begin{align*}
(dH+Hd)(p\ot\alpha)&=d(hp\ot g\alpha-fp\ot H\alpha)+H(dp\ot\alpha-p\ot d\alpha)\\
&=dhp\ot g\alpha+hp\ot dg\alpha-dfp\ot H\alpha+fp\ot dH\alpha\\&\quad+dp\ot H\alpha-hp\ot gd\alpha+fp\ot Hd\alpha\\
&=(gp-fp)\ot g\alpha+fp\ot dH\alpha+fp\ot Hd\alpha\\
&=gp\ot g\alpha-fp\ot f\alpha.
\end{align*}
\end{proof}

\begin{proposition}
If $(P,d)$ and $(Q,d_Q)$ are both admissible then $$HC^n(C^\bullet(d,\Fc))\simeq HC^n(C^\bullet(d_Q,\Fc)).$$
\end{proposition}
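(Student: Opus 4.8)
The plan is to connect the two precocyclic objects by a pair of maps, coming from the functoriality of $\Fc$ on $\Dc/\id$ recorded after Proposition~\ref{precocyclicstructure}, and to show they are mutually inverse on cyclic cohomology. First I would build these maps by lifting. Since $(Q,d_Q)$ is admissible, $d_Q\colon Q\to\id$ is an epimorphism, and as $P$ is projective the map $d\colon P\to\id$ lifts through $d_Q$ to an $f\colon P\to Q$ with $d_Q f=d$; symmetrically, using that $d$ is epi and $Q$ is projective, I obtain $g\colon Q\to P$ with $d g=d_Q$. Each is a morphism in $\Dc/\id$, so they induce maps of precocyclic objects $f^{*}\colon C^{\bullet}(d_Q,\Fc)\to C^{\bullet}(d,\Fc)$ and $g^{*}\colon C^{\bullet}(d,\Fc)\to C^{\bullet}(d_Q,\Fc)$, with $f^{*}g^{*}=(gf)^{*}$ and $g^{*}f^{*}=(fg)^{*}$ by contravariance. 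It then suffices to prove that $(gf)^{*}$ and $(fg)^{*}$ induce the identity on cyclic cohomology.

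Next I would compare $gf\colon P\to P$ with $Id_P$. Both lie over $\id$, since $d\circ gf=d_Q f=d=d\circ Id_P$, so $gf-Id_P$ factors through $\Ker d$. By the admissibility hypothesis the sequence $P^{\ot 2}\xrightarrow{d\ot Id-Id\ot d}P\xrightarrow{d}\id\to 0$ is exact, whence $d\ot Id-Id\ot d$ corestricts to an epimorphism onto $\Ker d$; projectivity of $P$ then lifts $gf-Id_P$ to an $h\colon P\to P^{\ot 2}$ with $(d\ot Id-Id\ot d)\circ h=gf-Id_P$. This is exactly the datum required by Lemma~\ref{homotopy}, applied to the two maps $Id_P,\,gf\colon P\to P$ over $\id$, which produces a presimplicial chain homotopy between $(gf)^{\ot n+1}$ and $Id^{\ot n+1}$. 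The same argument with the roles of $P$ and $Q$ exchanged, using admissibility of $(Q,d_Q)$, handles $fg$ against $Id_Q$.

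Applying the $k$-linear contravariant functor $\Fc$ then turns each chain homotopy into a cochain homotopy on the Hochschild complexes $\bigl(\Fc(P^{\ot\bullet+1}),\,\textstyle\sum_i(-1)^i\delta_i\bigr)$ and $\bigl(\Fc(Q^{\ot\bullet+1}),\,\textstyle\sum_i(-1)^i\delta_i\bigr)$; hence $f^{*}$ and $g^{*}$ are mutually inverse isomorphisms on Hochschild cohomology. It remains to upgrade this to cyclic cohomology, and this is the step I expect to be the main obstacle, since the homotopy produced by Lemma~\ref{homotopy} lives only on the Hochschild (presimplicial) level and need not be compatible with $\tau$. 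Because $f^{*}$ and $g^{*}$ are nonetheless genuine maps of precocyclic objects, they act compatibly on the whole Connes periodicity ($SBI$) long exact sequence relating $HH$ and $HC$, equivalently on the column filtration of the $(b,B)$-bicomplex whose $E_1$-page is Hochschild cohomology \cite{loday}. Since that filtration is bounded in each total degree, an isomorphism on $E_1$ forces an isomorphism on the abutment; equivalently one runs the five lemma along the $SBI$ sequence by induction on degree. This yields $HC^{n}(C^{\bullet}(d,\Fc))\simeq HC^{n}(C^{\bullet}(d_Q,\Fc))$, as claimed. The only points requiring care are verifying that the lifts $f,g,h$ really supply the hypotheses of Lemma~\ref{homotopy}, and that a purely Hochschild-level homotopy suffices for the cyclic conclusion via the bicomplex comparison.
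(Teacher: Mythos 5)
Your argument is correct and follows essentially the same route as the paper: lift along the surjections $d$ and $d_Q$ using projectivity, compare two lifts of the same map over $\id$ by factoring their difference through $\ker d=\mathrm{im}(d\ot \mathrm{Id}-\mathrm{Id}\ot d)$ and lifting again to produce the $h$ required by Lemma \ref{homotopy}, and then pass from the Hochschild level to cyclic cohomology. The only (cosmetic) difference is one of packaging — the paper shows the induced map on $HC$ is independent of the choice of lift and then observes that the composite canonical map $P\to Q\to P$ must agree with the one induced by $\mathrm{Id}_P$, whereas you directly homotope $gf$ to $\mathrm{Id}_P$ and $fg$ to $\mathrm{Id}_Q$ — and the final $HH$-to-$HC$ step that you carefully flag is exactly the step the paper passes over with an implicit appeal to the same standard comparison.
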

\begin{proof}
Suppose that $(P,d)$ and $(Q,d_Q)$ are admissible. Then since $P$ is projective and $d_Q$ is surjective, there exists $\alpha:P\rightarrow Q$  such that $d_Q\alpha=d_P$.  Let $\beta$ be any other lifting of $d_P$ to $Q$.  Since $\alpha-\beta$ has image in the kernel of $d_Q$ and by the admissibility of $(Q,d_Q)$ the kernel is equal to the image of $d_0-d_1$, we have a lifting by the projectivity of $P$, namely $h:P\rightarrow Q^{\ot 2}$ such that $\alpha-\beta=dh$.  We are in the setting of Lemma \ref{homotopy}.  Thus the maps $\alpha$ and $\beta$ between $C_n(P)$ and $C_n(Q)$ are chain homotopic.  Thus $$HH^n(\alpha^*)=HH^n(\beta^*)$$ and so $$HC^n(\alpha^*)=HC^n(\beta^*).$$ We've shown that there is always a canonical map $HC^n(C^\bullet(d,\Fc))\leftarrow HC^n(C^\bullet(d_Q,\Fc))$ for any pair of admissible $(P,d)$ and $(Q,d_Q)$.  Since the canonical map from $HC^n(C^\bullet(d,\Fc))$ to itself is $Id$, so we are done.
\end{proof}

This leads to the following definition of cyclic cohomology viewed as a contravariant functor from $\Zc'_\Dc\Dc^{*}$ to vector spaces:
\begin{definition}\label{cyclicofreopcenter}
Suppose that $\Dc$ possesses an admissible pair, we say that $\Dc$ is admissible. Let $\Fc\in \Zc'_\Dc\Dc^{*}$. Define $$HC_\Dc^n(\Fc)=HC^n(C^\bullet(d,\Fc))$$ for any admissible $(P,d)$.
\end{definition}

Note that elements of $\Zc'_\Dc\Dc^{*}$ pull back.  More precisely, if $F:\Cc\rightarrow\Dc$ is a monoidal functor and $\Fc\in \Zc'_\Dc\Dc^{*}$ then $$F^*\Fc(-)=\Fc(F(-))$$ is an element of $\Zc'_\Cc\Cc^{*}$.  This also follows more abstractly from the considerations set out below.

If $\Mc$ is a $\Dc$-bimodule, then $F^*\Mc$ is a $\Cc$-bimodule and if $\Fc:\Mc\rightarrow\Mc'$ is a $\Dc$-bimodule map, then so is $F^*\Fc:F^*\Mc\rightarrow F^*\Mc'$.  We have natural $\Cc$-bimodule maps $F^*\Dc^*\rightarrow\Cc^*$ and $\Cc\rightarrow F^*\Dc$.  An $\Fc\in\Zc_\Dc\Dc^{*}$ is the same as a $\Dc$-bimodule map $\Fc:\Dc\rightarrow\Dc^*$ and so we have $\Cc$-bimodule maps $$\Cc\rightarrow F^*\Dc\rightarrow F^*\Dc^*\rightarrow\Cc^*$$ the composition of which is what we called $F^*\Fc$.  While it is possible to define the restricted center condition abstractly as well, it is not worth it in practice.

\begin{lemma}\label{adj1}
Suppose that $\Cc$ is admissible and $F:\Cc\rightarrow\Dc$ is a monoidal functor that has an exact right adjoint ${^*F}$, then $\Dc$ is admissible and if $\Fc\in \Zc'_\Dc\Dc^{*}$ then $$HC_\Dc^n(\Fc)\simeq HC_\Cc^n(F^*\Fc).$$
\end{lemma}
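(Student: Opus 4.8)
The plan is to produce an explicit admissible pair in $\Dc$ by pushing the given one forward along $F$, and then to observe that the resulting precocyclic object on the $\Dc$ side coincides, term by term and structure map by structure map, with the one computing $HC^n_\Cc(F^*\Fc)$. The cohomology isomorphism then becomes a formal consequence rather than a separate computation.

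First I would fix an admissible pair $(P,d)$ in $\Cc$ and set $(Q,d_Q)=(F(P),F(d))$, using $F(\id)\simeq\id$ from strong monoidality. Admissibility of $(P,d)$ says precisely that $d\colon P\to\id$ exhibits $\id$ as the cokernel of $d\ot Id-Id\ot d\colon P^{\ot 2}\to P$. Since $F$ has a right adjoint it is itself a left adjoint, hence preserves cokernels and is in particular right exact; combined with strong monoidality (so that $F(P^{\ot 2})\simeq F(P)^{\ot 2}$ and $F(d\ot Id)=F(d)\ot Id$, etc.) this transports the cokernel presentation to the required exact sequence $F(P)^{\ot 2}\xrightarrow{F(d)\ot Id-Id\ot F(d)}F(P)\xrightarrow{F(d)}\id\to 0$ in $\Dc$. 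That $F(P)$ is projective follows from the adjunction isomorphism $\Hom_\Dc(F(P),-)\simeq\Hom_\Cc(P,\dF(-))$: since $\dF$ is exact and $\Hom_\Cc(P,-)$ is exact ($P$ being projective), the composite $\Hom_\Dc(F(P),-)$ is exact. Thus $\Dc$ is admissible, witnessed by $(F(P),F(d))$.

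Next I would compare the two precocyclic objects. By definition $C^n(d,F^*\Fc)=(F^*\Fc)(P^{\ot n+1})=\Fc(F(P^{\ot n+1}))\simeq\Fc(F(P)^{\ot n+1})=C^n(F(d),\Fc)$, the middle isomorphism coming from strong monoidality. It then remains to match the structure maps. The cofaces of $C^\bullet(d,F^*\Fc)$ are $(F^*\Fc)(\delta'_i)=\Fc(F(\delta'_i))$, and monoidality identifies $F(Id^{\ot i}\ot d\ot Id^{\ot n+1-i})$ with $Id^{\ot i}\ot F(d)\ot Id^{\ot n+1-i}$, which are exactly the cofaces of $C^\bullet(F(d),\Fc)$. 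For the cocyclic operators, the pulled-back $2$-contratrace $F^*\Fc$ carries the symmetry $\tau'_{c,c'}=\tau_{F(c),F(c')}$ (again via the monoidal isomorphisms), so $\tau'_n=\tau'_{P,P^{\ot n}}=\tau_{F(P),F(P)^{\ot n}}$ agrees with the cocyclic operator $\tau_n$ of $C^\bullet(F(d),\Fc)$. Hence the two precocyclic objects are isomorphic. Finally, invoking Definition \ref{cyclicofreopcenter} together with the independence of $HC^n$ on the choice of admissible pair (the preceding Proposition), I would compute $HC^n_\Dc(\Fc)=HC^n(C^\bullet(F(d),\Fc))$ and $HC^n_\Cc(F^*\Fc)=HC^n(C^\bullet(d,F^*\Fc))$, whence the identification of the complexes yields $HC^n_\Dc(\Fc)\simeq HC^n_\Cc(F^*\Fc)$.

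I expect the main obstacle to lie entirely in the admissibility step. Concretely, one must be careful that $F$ genuinely preserves the exactness defining an admissible pair, which forces the use of $F$ being right exact (as a left adjoint) together with the observation that the relevant sequence is a cokernel presentation; and one must verify that $F(P)$ remains projective, which is exactly the point at which the hypothesis that the right adjoint $\dF$ is exact is needed. Once $\Dc$ is known to be admissible via $(F(P),F(d))$, the identification of the two complexes is essentially formal, resting on strong monoidality and the definition of $F^*\Fc$, modulo the routine bookkeeping of the suppressed coherence isomorphisms.
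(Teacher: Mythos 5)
Your proposal is correct and follows essentially the same route as the paper: push the admissible pair forward to $(F(P),F(d))$, using right exactness of $F$ (as a left adjoint) for the exact sequence and the adjunction $\Hom_\Dc(F(P),-)\simeq\Hom_\Cc(P,{^*F}(-))$ with exactness of ${^*F}$ for projectivity, then identify $C^\bullet(d,F^*\Fc)$ with $C^\bullet(F(d),\Fc)$. The paper compresses the last step to ``essentially by definition,'' and your term-by-term matching of cofaces and cyclic operators is just that step written out.
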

\begin{proof}
If $(P,d)$ is an admissible pair then since $F$ is right exact, so if $$P^{\ot 2}\rightarrow P\rightarrow \id\rightarrow 0$$ is exact then so is  $$F(P)^{\ot 2}\rightarrow F(P)\rightarrow \id\rightarrow 0.$$  If $P$ is projective then $\Hom_\Dc(F(P),-)\simeq\Hom_\Cc(P,{^*F}(-))$ is exact since both ${^*F}(-)$ and $\Hom_\Cc(P,-)$ are, and so $F(P)$ is projective. Thus $(F(P),F(d))$ is also an admissible pair. Furthermore, the rest follows immediately, since $C^\bullet(d,F^*\Fc)\simeq C^\bullet(F(d),\Fc)$ essentially by definition.
\end{proof}

\section{The world of $2$-categories}
Though our intention is to talk about algebras in monoidal categories and $2$-contratraces, it pays to at this point remember that our monoidal category $\Cc$ is a $2$-category with one object.

\subsection{The $2$-category $\bimod_\Cc$}
Less trivially, we will actually be concerned with a particular $2$-category associated to $\Cc$, namely $\bimod_\Cc$.  Before we continue we should assume that $\ot$ in $\Cc$ is right exact in both variables.  The objects in $\bimod_\Cc$ are unital associative algebras in $\Cc$ and the $1$-morphisms from $B$ to $A$ are $\Hm(B,A)=\bimod_\Cc(A,B)$, the category of left $A$ and right $B$ bimodules in $\Cc$.  Since the $\ot$ is right exact so $\bimod_\Cc(A,B)$ is abelian and the forgetful functor to $\Cc$ is both exact and reflects exactness. The composition $$\Hm(B,C)\times \Hm(A,B)\rightarrow \Hm(A,C)$$ is given by $$\bimod_\Cc(C,B)\times\bimod_\Cc(B,A)\rightarrow \bimod_\Cc(C,A)$$ $$(S,T)\mapsto S\ot_B T$$ where $-\ot_B-$ is the cokernel of the difference map $-\ot B\ot -\rightarrow -\ot -$ and is thus also right exact.

\subsection{More generally}
Keeping the above example in mind, suppose that $\Bc$ is a $2$-category.  Let ${_y\Bc_x}=\Hm(x,y)$ and $\Bc_x=\Hm(x,x)$ where $\Bc_x$ is a monoidal category of the type we are familiar with. The notion of a $\Bc$-bimodule is readily generalized from that of bimodule categories over monoidal categories and would yield for every $x\in\Bc$ a $\Bc_x$-bimodule.  Roughly speaking we have categories ${_y\Mc_x}$ and actions: $${_t\Bc_y}\times{_y\Mc_x}\times{_x\Bc_z}\rightarrow{_t\Mc_z}.$$  Obviously $\Bc$ itself is a bimodule over itself.

Define a different $\Bc$-bimodule: $\Bc^*$ by $${_y\Bc^*_x}=Fun({_x\Bc^{op}_y},\vect)^{op}$$  and $${_t\Bc_y}\times{_y\Bc^*_x}\times{_x\Bc_z}\rightarrow{_t\Bc^*_z}$$ $$(p,F(-),q)\mapsto F(q\circ -\circ p).$$  Note that ${_x\Bc^*_x}=\Bc_x^*$ as expected.

Observe that the center of $\Bc^*$ consists, in particular, of a collection $\Fc_x\in\Zc_{\Bc_x}\Bc_x^*$ together with isomorphisms $$\tau_{q,p}:\Fc_x(q\circ p)\rightarrow\Fc_y(p\circ q)$$ for $p\in{_y\Bc_x}$ and $q\in{_x\Bc_y}$.  We say that $\Fc_\bullet\in\Zc'_{\Bc}\Bc^*$, i.e., in the restricted center of $\Bc^*$ if as expected $$\tau_{p,q}\circ\tau_{q,p}=Id_{\Fc_x(q\circ p)}$$ for all $x,y$ and $p,q$.  In particular, if $\Fc_\bullet\in\Zc'_{\Bc}\Bc^*$ then $\Fc_x\in\Zc'_{\Bc_x}\Bc_x^*$.

\begin{definition}
We say that an object  $x$ in $\Bc$ is admissible if $\Bc_x$ is admissible.

Assuming that $x$ is admissible define the cyclic cohomology of $x$ with coefficients in $\Fc_\bullet\in\Zc'_{\Bc}\Bc^*$ as follows: $$HC^n(x,\Fc_\bullet)=HC^n_{\Bc_x}(\Fc_x).$$
\end{definition}

We are ready for a precursor to Morita invariance.

\begin{proposition}\label{morita1}
Let $x,y$ be objects in $\Bc$, suppose that there exist $p\in{_y\Bc_x}$ and $q\in{_x\Bc_y}$ such that we have isomorphisms $\id_x\simeq q\circ p$ and $p\circ q\simeq\id_y$ then $x$ is admissible if and only if $y$ is admissible and in either case $$HC^n(x,\Fc)\simeq HC^n(y,\Fc).$$

\end{proposition}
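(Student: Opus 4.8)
The plan is to turn the Morita data $(p,q)$ into a monoidal equivalence between $\Bc_x$ and $\Bc_y$ and then feed it into Lemma~\ref{adj1}. First I would define $F\colon\Bc_x\to\Bc_y$ on objects by $F(a)=p\circ a\circ q$ and check that it is strongly monoidal: the isomorphism $\id_x\simeq q\circ p$ gives $F(a)\circ F(b)=p\circ a\circ q\circ p\circ b\circ q\simeq p\circ(a\circ b)\circ q=F(a\circ b)$, while $p\circ q\simeq\id_y$ gives $F(\id_x)=p\circ q\simeq\id_y$. The same recipe with $p$ and $q$ interchanged produces $G\colon\Bc_y\to\Bc_x$, $G(b)=q\circ b\circ p$, and the two assumed isomorphisms show that $F$ and $G$ are mutually quasi-inverse. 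In particular $F$ is an equivalence of abelian categories, so its quasi-inverse $G$ is simultaneously a right adjoint to $F$ and is exact.

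Thus $F$ satisfies the hypotheses of Lemma~\ref{adj1}: it is a monoidal functor $\Bc_x\to\Bc_y$ with exact right adjoint $G={^*F}$. Applying the lemma with $\Cc=\Bc_x$, $\Dc=\Bc_y$ and $\Fc=\Fc_y\in\Zc'_{\Bc_y}\Bc_y^{*}$ shows that if $\Bc_x$ is admissible then so is $\Bc_y$, and that $$HC^n_{\Bc_y}(\Fc_y)\simeq HC^n_{\Bc_x}(F^{*}\Fc_y).$$ Running the same argument with $G\colon\Bc_y\to\Bc_x$ (whose exact right adjoint is $F$) gives the reverse implication for admissibility, so $x$ is admissible if and only if $y$ is. It then remains to identify $F^{*}\Fc_y$ with $\Fc_x$ inside $\Zc'_{\Bc_x}\Bc_x^{*}$.

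For the identification, note that $F^{*}\Fc_y(a)=\Fc_y(p\circ a\circ q)$. The center structure of $\Fc_\bullet$ supplies, for $p\circ a\in{_y\Bc_x}$ and $q\in{_x\Bc_y}$, the isomorphism $\tau_{q,\,p\circ a}\colon\Fc_x(q\circ p\circ a)\to\Fc_y(p\circ a\circ q)$; composing this with the isomorphism $\Fc_x(a)\simeq\Fc_x(q\circ p\circ a)$ induced by $q\circ p\simeq\id_x$ and the functoriality of $\Fc_x$ yields a natural isomorphism $\Fc_x\simeq F^{*}\Fc_y$. Since $HC^n_{\Bc_x}(-)$ is a functor on $\Zc'_{\Bc_x}\Bc_x^{*}$, this gives $HC^n_{\Bc_x}(F^{*}\Fc_y)\simeq HC^n_{\Bc_x}(\Fc_x)$, whence $HC^n(y,\Fc)=HC^n_{\Bc_y}(\Fc_y)\simeq HC^n_{\Bc_x}(\Fc_x)=HC^n(x,\Fc)$. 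I expect the main obstacle to be precisely this last identification: one must verify not merely that $\Fc_x$ and $F^{*}\Fc_y$ agree as contravariant functors, but that the displayed isomorphism is a morphism in the \emph{restricted} center, i.e.\ that it intertwines the two symmetric $2$-contratrace structures. This is exactly where the hypothesis $\Fc_\bullet\in\Zc'_{\Bc}\Bc^{*}$---rather than just a pair of unrelated contratraces $\Fc_x,\Fc_y$---is needed: the coherence among the maps $\tau_{q,p}$ built into the center is what guarantees the required compatibility.
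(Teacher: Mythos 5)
Your proposal is correct and follows essentially the same route as the paper: define $F=p\circ-\circ q$ as a monoidal equivalence with exact right adjoint (quasi-inverse) $q\circ-\circ p$, identify $F^{*}\Fc_y\simeq\Fc_x$ in $\Zc'_{\Bc_x}\Bc_x^{*}$ using the center structure $\tau$ together with $q\circ p\simeq\id_x$, and conclude via Lemma~\ref{adj1}. The only difference is cosmetic (the paper flips via $\Fc_y(p\circ-\circ q)\simeq\Fc_x(-\circ q\circ p)$ rather than $\Fc_x(q\circ p\circ-)$), and your closing caveat about checking compatibility with the restricted-center structure is a point the paper asserts without elaboration.
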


\begin{proof}
Let $F:\Bc_x\rightarrow \Bc_y$ be given by $p\circ -\circ q$, then it is a monoidal equivalence.  Thus $x$ is admissible if and only if $y$ is admissible.  Clearly ${^*F}=q\circ -\circ p$ and is exact.  Note that $F^*\Fc_y=\Fc_y(p\circ -\circ q)\simeq\Fc_x(-\circ q\circ p)\simeq\Fc_x(-)$ is an equivalence in $\Zc'_{\Bc_x}\Bc_x^*$ and so we are done by Lemma \ref{adj1}.
\end{proof}

Note that, just as before, if $F:\Bc\rightarrow\Bc'$ is a $2$-functor then $F^*\Fc_\bullet\in \Zc'_{\Bc}\Bc^*$ if $\Fc_\bullet\in \Zc'_{\Bc'}\Bc'^*$.

\begin{definition}
We say that a $2$-functor $F:\Bc\rightarrow\Bc'$ is split if for all $x\in\Bc$ the functor $F_x:\Bc_x\rightarrow\Bc'_{F(x)}$ has an exact right adjoint.
\end{definition}

And now an adjunction, an analogue of Lemma \ref{adj1}, that leads to $2$-Morita invariance.

\begin{proposition}\label{adj2}
Let $F:\Bc\rightarrow\Bc'$ be split, then if $x$ is admissible, we have $$HC^n(x, F^*\Fc_\bullet)\simeq HC^n(F(x), \Fc_\bullet).$$
\end{proposition}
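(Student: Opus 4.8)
The plan is to reduce the statement to Lemma \ref{adj1} applied to the monoidal functor that $F$ induces on the relevant endomorphism category. Since $F:\Bc\to\Bc'$ is a $2$-functor, it restricts on the hom-category $\Bc_x=\Hm(x,x)$ to a functor $F_x:\Bc_x\to\Bc'_{F(x)}$, and because $F$ respects the composition of $1$-morphisms --- which is precisely the monoidal product of $\Bc_x$ --- this $F_x$ is a (strongly) monoidal functor between the monoidal categories $\Bc_x$ and $\Bc'_{F(x)}$. Everything then comes down to recognizing the two sides of the desired isomorphism as the two sides of Lemma \ref{adj1} for $F_x$.

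First I would record the two identifications that make the reduction possible. By definition $HC^n(F(x),\Fc_\bullet)=HC^n_{\Bc'_{F(x)}}(\Fc_{F(x)})$, while $HC^n(x,F^*\Fc_\bullet)=HC^n_{\Bc_x}\big((F^*\Fc_\bullet)_x\big)$. The key point is that the $x$-component of the pulled-back $2$-contratrace is exactly the pullback of $\Fc_{F(x)}$ along $F_x$: unwinding the definition of $F^*$ for $2$-functors (the analogue of the formula $F^*\Fc(-)=\Fc(F(-))$ used in the monoidal setting), one obtains
$$(F^*\Fc_\bullet)_x(-)=\Fc_{F(x)}(F_x(-))=F_x^*\Fc_{F(x)}(-),$$
as objects of $\Zc'_{\Bc_x}\Bc_x^*$, compatibly with the symmetry data $\tau$.

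Next I would verify the hypotheses of Lemma \ref{adj1} for $F_x$. Since $x$ is admissible, $\Bc_x$ is admissible by definition; since $F$ is split, $F_x$ has an exact right adjoint. Thus Lemma \ref{adj1}, applied with $\Cc=\Bc_x$, $\Dc=\Bc'_{F(x)}$, $F=F_x$, and $\Fc=\Fc_{F(x)}\in\Zc'_{\Bc'_{F(x)}}(\Bc'_{F(x)})^*$, yields both that $\Bc'_{F(x)}$ is admissible (equivalently, that $F(x)$ is admissible) and the isomorphism
$$HC^n_{\Bc'_{F(x)}}(\Fc_{F(x)})\simeq HC^n_{\Bc_x}(F_x^*\Fc_{F(x)}).$$
Combining this with the two identifications above gives $HC^n(F(x),\Fc_\bullet)\simeq HC^n(x,F^*\Fc_\bullet)$, as desired.

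The only genuinely non-formal step is the second one: confirming that the restriction of the $2$-functorial pullback $F^*\Fc_\bullet$ to the single object $x$ agrees, as a symmetric $2$-contratrace on $\Bc_x$, with the pullback $F_x^*\Fc_{F(x)}$ along the induced monoidal functor. This is where one must check that the two notions of $F^*$ --- the $2$-categorical one defined on $\Zc'_{\Bc'}\Bc'^*$ and the monoidal-functor one from Lemma \ref{adj1} --- are compatible on the diagonal ${_x\Bc^*_x}=\Bc_x^*$, including coherence of the $\tau$-isomorphisms. Once this compatibility is in hand, the rest is a direct appeal to the already-established machinery, so I expect no further obstacle.
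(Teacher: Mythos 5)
Your proposal is correct and follows exactly the paper's argument: the paper's entire proof reads ``This is Lemma \ref{adj1} applied to $F_x:\Bc_x\rightarrow\Bc'_{F(x)}$.'' You simply spell out the identification $(F^*\Fc_\bullet)_x = F_x^*\Fc_{F(x)}$ and the verification of the hypotheses, which the paper leaves implicit.
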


\begin{proof}
This is Lemma \ref{adj1} applied to $F_x:\Bc_x\rightarrow\Bc'_{F(x)}$.
\end{proof}

\subsection{The symmetric $2$-contratraces on the $2$-category $\bimod_\Cc$}
We keep assuming that the product in $\Cc$ is right exact.  Let $\Fc\in\Zc'_\Cc\Cc^*$ and assume that $\Fc\in Fun(\Cc^{op},\vect)$ is left exact.  Let $A$ be a unital associative algebra in $\Cc$.  Define a new functor $\Fc_A\in Fun((\bimod_\Cc A)^{op},\vect)$ as the equalizer of the two actions of $A$, i.e., $\tau_{A,S}\circ\Fc(l_a)$ and $\Fc(r_a)$ from $\Fc(S)$ to $\Fc(S\ot A)$.  Using left exactness of $\Fc$ in addition to the fact that  $\Fc\in\Zc'_\Cc\Cc^*$ we get that $\Fc_A\in\Zc'_{\bimod_\Cc A}(\bimod_\Cc A^*)$ and $\Fc_A$ is still left exact. In fact we also have $$\Fc_A(S\ot_B T)\simeq\Fc_B(T\ot_A S)$$ thus this procedure defines an element $\Fc_\bullet\in\Zc'_{\bimod_\Cc}(\bimod_\Cc^*)$.  It is not hard to see that  any left exact $\Fc_\bullet\in\Zc'_{\bimod_\Cc}(\bimod_\Cc^*)$ arises in this way.

More formally, the above can be obtained from the observation that under suitable exactness assumptions $\Cc$-bimodules and maps between them determine and are in turn determined by the corresponding $\bimod_{-}$ variants.  Roughly speaking, for a $\Cc$-bimodule $\Mc$ with right exact actions, we have a $\bimod_\Cc$-bimodule $\bimod_\Mc$ and a right exact $\Cc$-bimodule functor $F:\Mc\rightarrow\Mc'$ induces a $\bimod_\Cc$-bimodule map $\bimod_F:\bimod_\Mc\rightarrow\bimod_{\Mc'}$.

\subsection{The old and the new}
The purpose of this section is to connect the notions above to the definitions of \cite{hks}.  Let $\Bc$ be a $2$-category and $\Fc_\bullet\in\Zc'_{\Bc}(\Bc^*)$.  Let $x,y\in\Bc$ and $p\in{_y\Bc_x}, q\in{_x\Bc_y}$ such that we have adjunctions $\iota:\id_x\rightarrow q\circ p$ and $\epsilon: p\circ q\rightarrow\id_y$ with the usual conditions: the composition $$q\rightarrow (q\circ p)\circ q\simeq q\circ(p\circ q)\rightarrow q$$ is identity, and similarly for $p$.  Under these assumptions $A=q\circ p$ is a unital associative algebra in $\Bc_x$ and $C=p\circ q$ is a counital coassociative coalgebra in $\Bc_y$.  Furthermore, we have an isomorphism of precocyclic objects  $$\tau_{q,p\circ A^{\circ n}}:\Fc_x(A^{\circ n+1})\rightarrow C^n(\epsilon,\Fc_y)$$ where the structure on the left is from \cite{hks} as explained in Section \ref{review} and on the right from Section \ref{precocyclic}.  In fact the isomorphism is of cocyclic objects, with the left already such, and the right obtaining the codegeneracies from the comultiplication on $C$.

\begin{remark}\rm{
The isomorphism $\tau_{q,p\circ A^{\circ n}}$ being a ``flip" itself explains the presence of the ``flipover coface" on the left and its absence on the right.
}\end{remark}

The following definition is not equivalent to the one in \cite{hks} unless additional assumptions are made.
\begin{definition}
Let $A$ be a unital associative algebra in $\Cc$ and $\Fc\in\Zc'_\Cc\Cc^*$ left exact. Then we obtain an $\Fc_\bullet\in\Zc'_{\bimod_\Cc}(\bimod_\Cc^*)$ and define the cyclic cohomology of $A$ with coefficients in $\Fc$ to be $$HC^n(A,\Fc):=HC^n(A,\Fc_\bullet)$$ provided that $A$ is admissible as an object of $\bimod_\Cc$. We often write $HC^n_\Cc(A,\Fc)$ to emphasize the monoidal category.
\end{definition}

\begin{proposition}\label{oldvsnew}
Let $\id\in\Cc$ be projective, and $\Fc$ a left exact symmetric $2$-contratrace then every algebra is admissible and $$HC^n_{old}(A,\Fc)\simeq HC^n(A,\Fc).$$
\end{proposition}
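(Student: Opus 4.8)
The plan is to realize both the old and the new cocyclic objects as the two ends of the comparison isomorphism already set up in the section ``The old and the new'', by feeding it the \emph{self-dual} $1$-morphism data $p=q=A$ inside $\Bc=\bimod_\Cc$. Concretely, I would take $x=\id$ (the monoidal unit, so that $\Bc_x=\Cc$ and the induced coefficient is $\Fc_\id=\Fc$) and $y=A$ (so that $\Bc_y$ is the category of $A$-bimodules and the induced coefficient is $\Fc_A$). Let $p=A\in{_A\Bc_\id}$ be the regular left $A$-module and $q=A\in{_\id\Bc_A}$ the regular right $A$-module, with $\iota=u\colon\id\to A$ the unit and $\epsilon=m\colon A\ot A\to A$ the multiplication. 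Then $q\circ p=A\ot_A A\simeq A$ recovers the algebra $A$ in $\Cc$, while $C=p\circ q=A\ot A$ is the bar coalgebra in $A$-bimodules with counit $\epsilon=m$; the two triangle identities for $(\iota,\epsilon)$ are precisely the left and right unit axioms of $A$, so the abstract adjunction hypothesis of that section is met.

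First I would establish admissibility. I claim $(P,d)=(A\ot A,m)$ is an admissible pair in the monoidal category $\Bc_A$ of $A$-bimodules (whose unit is $A$ and whose tensor is $\ot_A$). For projectivity, $A\ot A=A\ot\id\ot A$ is the free $A$-bimodule on $\id$, so $\Hom_{\Bc_A}(A\ot\id\ot A,-)\simeq\Hom_\Cc(\id,-)$ via the free/forgetful adjunction composed with the exact forgetful functor; since $\id$ is projective by hypothesis this is exact, whence $A\ot A$ is a projective $A$-bimodule. For the exactness condition, note that $P\ot_A P\simeq A^{\ot 3}$ and that $d\ot_A Id-Id\ot_A d$ together with $d=m$ give the tail $A^{\ot 3}\to A^{\ot 2}\to A\to 0$ of the bar resolution of $A$; this is exact in $\Cc$ (contracting homotopy $1\ot-$) and hence exact in $\Bc_A$ because the forgetful functor reflects exactness. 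This simultaneously proves that every algebra $A$ is admissible as an object of $\bimod_\Cc$, which is the first assertion.

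With admissibility in hand, the new theory may be computed with this particular pair by the independence-of-pair proposition: $HC^n(A,\Fc)=HC^n_{\Bc_A}(\Fc_A)=HC^n\bigl(C^\bullet(m,\Fc_A)\bigr)$, and since $(A\ot A)^{\ot_A\,n+1}\simeq A^{\ot n+2}$ this precocyclic object is exactly $C^n(\epsilon,\Fc_y)$ in the notation of ``The old and the new''. Meanwhile the old theory is $HC^n_{old}(A,\Fc)=HC^n\bigl(\Fc(A^{\ot\bullet+1})\bigr)=HC^n\bigl(\Fc_x(A^{\circ\bullet+1})\bigr)$. The comparison map $\tau_{q,\,p\circ A^{\circ n}}\colon\Fc_x(A^{\circ n+1})\to C^n(\epsilon,\Fc_y)$ was asserted there to be an isomorphism of cocyclic objects, so applying $HC^n$ yields $HC^n_{old}(A,\Fc)\simeq HC^n(A,\Fc)$.

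The hard part will be bookkeeping rather than conceptual: I must verify that the self-dual choice $p=q=A$ genuinely instantiates the adjunction of that section (reducing the triangle identities to unitality of $A$), that the coefficient components are correctly identified as $\Fc_\id=\Fc$ and $\Fc_A$ (the latter requiring the left-exactness hypothesis on $\Fc$, so that $\Fc_\bullet\in\Zc'_{\bimod_\Cc}(\bimod_\Cc^*)$ is even defined), and that the admissible pair furnished by the bar object $C=p\circ q$ is literally the same $(A\ot A,m)$ used to compute the new theory. Once these identifications are pinned down, the cocyclic isomorphism $\tau_{q,\,p\circ A^{\circ n}}$ does all the remaining work, and the projectivity of $\id$ enters only through the admissibility step above.
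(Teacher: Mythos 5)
Your proposal is correct and follows essentially the same route as the paper's own proof: the same choice of $x=\id$, $y=A$, $p=A^l$, $q=A^r$ in $\bimod_\Cc$, the same admissibility verification for $(A\ot A,m)$ in $\bimod_\Cc A$ (projectivity via the free/forgetful adjunction together with projectivity of $\id$, and exactness of the bar tail), and the same concluding appeal to the cocyclic isomorphism $\tau_{q,\,p\circ A^{\circ n}}$. The only difference is that you spell out in detail the admissibility facts the paper merely asserts.
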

\begin{proof}
Recall the discussion in the beginning of this section.  Let $\Bc=\bimod_\Cc$, $x=\id$, $y=A$, $p=A^l$ (where $A$ is considered as a left $A$-module), $q=A^r$ (where $A$ is considered as a right $A$-module).  So $A=A$ with $\iota:\id\rightarrow A$ in $\Cc$ and $C=A\ot A$ with $\epsilon=m:A\ot A\rightarrow A$ in $\bimod_\Cc A$. Note that $$C^{\ot 2}\rightarrow C\rightarrow \id\rightarrow 0$$ is exact in $\bimod_\Cc A$ while $C$ is projective in $\bimod_\Cc A$ if  $1$ is projective in $\Cc$.  So $(C,\epsilon)$ is admissible and the proposition follows from the isomorphism $\tau_{q,p\circ A^{\circ n}}$ of precocyclic objects above.
\end{proof}

Again, consider the situation described in the beginning of this section.  In addition, assume that $$C^{\circ 2}\xrightarrow{\epsilon\ot Id-Id\ot \epsilon}C\xrightarrow{\epsilon}\id_y\rightarrow 0$$ is exact.  The latter is equivalent to $\epsilon$ inducing an isomorphism $p\circ_A q\rightarrow \id_y$.  It is easy to see that $$q\circ -\circ p:\Bc_y\rightarrow\bimod_{\Bc_x}A$$ is a monoidal equivalence with the inverse $p\ot_A -\ot_A q$.  Furthermore, provided that $\Fc_x$ is left exact, $\Fc_y$ identifies with $(\Fc_x)_A$ explaining the earlier remark that in $\bimod_\Cc$ all $\Fc_\bullet$ come from $\Fc$ on $\Cc$.  Let us summarize.

\begin{definition}
We say that $(p,q)$ is an adjoint pair if it satisfies both the adjunction conditions in the beginning of this section and the additional exactness property above.
\end{definition}

\begin{lemma}\label{adjlemma}
Let $\Bc$ be a $2$-category and $\Fc_\bullet\in\Zc'_{\Bc}(\Bc^*)$.  If $x,y\in\Bc$ and $(p,q)$ is an adjoint pair, then provided that $\Fc_x$ is left exact and $B\in\Bc_y$ is admissible, we have $$HC^n_{\Bc_y}(B,\Fc_y)\simeq HC^n_{\bimod_{\Bc_x} A}(q\circ B\circ p,(\Fc_x)_A),$$ where as usual $A=q\circ p$.
\end{lemma}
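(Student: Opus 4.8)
The plan is to read the asserted isomorphism as an instance of invariance of the cyclic cohomology of an algebra under a monoidal equivalence of its ambient category, and then to run this through the $2$-categorical adjunction of Proposition \ref{adj2}. The input is the discussion preceding the lemma: since $(p,q)$ is an adjoint pair, $G:=q\circ-\circ p\colon\Bc_y\rightarrow\bimod_{\Bc_x}A$ is a monoidal equivalence with quasi-inverse $p\ot_A-\ot_A q$, the exactness clause in the definition of an adjoint pair (equivalently, $\epsilon$ inducing $p\circ_A q\simeq\id_y$) being exactly what makes the quasi-inverse land back in $\Bc_y$. As $G$ is strongly monoidal it carries the unital associative algebra $B$ to the unital associative algebra $G(B)=q\circ B\circ p$ of the right-hand side, and, using that $\Fc_x$ is left exact, it produces the identification $G^*\big((\Fc_x)_A\big)\simeq\Fc_y$ in $\Zc'_{\Bc_y}\Bc_y^*$ already recorded before the lemma.

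First I would promote $G$ to a $2$-functor. A strong monoidal functor $G\colon\Cc\rightarrow\Dc$ induces a $2$-functor $\bimod_G\colon\bimod_\Cc\rightarrow\bimod_\Dc$, acting on objects by $R\mapsto G(R)$, on bimodules by applying $G$ and transporting the actions through the monoidal constraints, and on bimodule homomorphisms by applying $G$. When $G$ is an equivalence, each induced component $(\bimod_\Cc)_R\rightarrow(\bimod_\Dc)_{G(R)}$ is an equivalence of abelian categories, hence has an exact right adjoint; thus $\bimod_G$ is split in the sense of Proposition \ref{adj2} and, being an equivalence, preserves admissibility. Specializing to $\Cc=\Bc_y$ and $\Dc=\bimod_{\Bc_x}A$, the standing hypothesis that $B$ is admissible says precisely that $B$ is admissible as an object of $\bimod_{\Bc_y}$, so $\bimod_G$ and the object $B$ satisfy the hypotheses of Proposition \ref{adj2}.

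Applying Proposition \ref{adj2} to $\bimod_G$ and $B$ gives $HC^n\big(B,(\bimod_G)^*\Gc_\bullet\big)\simeq HC^n\big(\bimod_G(B),\Gc_\bullet\big)$, where $\Gc_\bullet$ denotes the $2$-contratrace on $\bimod_{\bimod_{\Bc_x}A}$ induced by $(\Fc_x)_A$. Here $\bimod_G(B)=q\circ B\circ p$, so unwinding the definition of $HC^n_\Cc(-,-)$ for algebras identifies the right-hand side with $HC^n_{\bimod_{\Bc_x}A}(q\circ B\circ p,(\Fc_x)_A)$, and it identifies the left-hand side with $HC^n_{\Bc_y}(B,\Fc_y)$ as soon as we know $(\bimod_G)^*\Gc_\bullet\simeq(\Fc_y)_\bullet$.

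This final coefficient bookkeeping is the step I expect to be the main obstacle, and it is where left exactness of $\Fc_x$ is genuinely consumed. I would check that the two operations at play — passing from a left exact element of $\Zc'_\Cc\Cc^*$ to the induced element of $\Zc'_{\bimod_\Cc}(\bimod_\Cc^*)$ by the equalizer construction defining $(-)_A$, and pulling back along a monoidal functor — commute up to canonical isomorphism. Concretely, $(\bimod_G)^*\Gc_\bullet$ is computed degreewise from $(\Fc_x)_A\circ G$ together with the flips $\tau$, so the isomorphism $G^*(\Fc_x)_A\simeq\Fc_y$ of the first paragraph should upgrade, by naturality of these equalizers and compatibility of the $\tau$'s with the monoidal structure of $G$, to the needed $(\bimod_G)^*\Gc_\bullet\simeq(\Fc_y)_\bullet$; once recorded, the displayed chain closes and the lemma follows. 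One may instead bypass $\bimod_G$ and apply Lemma \ref{adj1} directly to the monoidal equivalence $G$ after rewriting both sides through the presentation $HC^n_\Cc(A,\Fc)=HC^n_{(\bimod_\Cc)_A}((\Fc_\bullet)_A)$; the content is identical.
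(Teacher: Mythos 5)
Your proposal is correct and follows essentially the same route as the paper, which presents Lemma \ref{adjlemma} as a summary of the immediately preceding discussion: $q\circ-\circ p$ is a monoidal equivalence $\Bc_y\rightarrow\bimod_{\Bc_x}A$ carrying $B$ to $q\circ B\circ p$, the left exactness of $\Fc_x$ identifies $\Fc_y$ with $(\Fc_x)_A$ under pullback, and the invariance of cyclic cohomology under such an equivalence is supplied by the split $2$-functor machinery of Proposition \ref{adj2} (equivalently Lemma \ref{adj1}). Your extra care with the coefficient bookkeeping $(\bimod_G)^*\Gc_\bullet\simeq(\Fc_y)_\bullet$ just makes explicit what the paper leaves implicit.
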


\subsection{Morita invariance}
When two unital associative algebras $A$ and $B$ are Morita equivalent in $\Cc$, i.e., there are bimodules $P$ and $Q$ with $P\ot_A Q\simeq B$ and $Q\ot_B P\simeq A$, then we expect the cyclic cohomology to be the same.  This is an immediate consequence of  Proposition \ref{morita1} applied to $\bimod_\Cc$.

\begin{proposition}\label{cyclicismoritainvar}
Let $\Cc$ be a monoidal category and $\Fc\in\Zc'_\Cc\Cc^*$ left exact.  Suppose that $A$ and $B$ are Morita equivalent algebras in $\Cc$ and $A$ or $B$ is admissible, then $$HC^n(A,\Fc)\simeq HC^n(B,\Fc).$$
\end{proposition}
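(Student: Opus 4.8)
The plan is to reduce the statement to Proposition~\ref{morita1} applied to the $2$-category $\Bc=\bimod_\Cc$. The key observation is that Morita equivalence of the algebras $A$ and $B$ in $\Cc$ is exactly the hypothesis of Proposition~\ref{morita1} when we view $A$ and $B$ as objects $x=A$ and $y=B$ of $\bimod_\Cc$. Indeed, the bimodules $P$ and $Q$ witnessing the Morita equivalence are precisely $1$-morphisms $p\in{_y\Bc_x}=\bimod_\Cc(B,A)$ and $q\in{_x\Bc_y}=\bimod_\Cc(A,B)$, and the isomorphisms $P\ot_A Q\simeq B$ and $Q\ot_B P\simeq A$ are exactly the isomorphisms $p\circ q\simeq\id_y$ and $\id_x\simeq q\circ p$ required there, once one recalls that composition of $1$-morphisms in $\bimod_\Cc$ is the relative tensor product $-\ot_{(\cdot)}-$.

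First I would unwind the definitions so that the indexing matches: in the notation of the previous section, an object of $\bimod_\Cc$ is a unital associative algebra, so taking $x=A$ and $y=B$ is legitimate, and the hom-categories ${_y\Bc_x}$, ${_x\Bc_y}$ are the bimodule categories containing $P$ and $Q$. Then I would check that the two isomorphism hypotheses of Proposition~\ref{morita1}, namely $\id_x\simeq q\circ p$ and $p\circ q\simeq\id_y$, are literally the Morita data $Q\ot_A P\simeq A$ and $P\ot_A Q\simeq B$ (up to the bookkeeping of which of $P,Q$ is called $p$ and which is called $q$). The left exactness of $\Fc$ is needed so that, by the construction in the section on symmetric $2$-contratraces on $\bimod_\Cc$, $\Fc$ produces an element $\Fc_\bullet\in\Zc'_{\bimod_\Cc}(\bimod_\Cc^*)$; this is what lets us even speak of $HC^n(A,\Fc_\bullet)=HC^n_{\bimod_\Cc A}(\Fc_A)$, which by the governing definition is $HC^n(A,\Fc)$, and similarly for $B$.

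With the hypotheses of Proposition~\ref{morita1} verified, that proposition yields simultaneously that $x=A$ is admissible if and only if $y=B$ is (covering the ``$A$ or $B$ is admissible'' clause), and the isomorphism $HC^n(A,\Fc_\bullet)\simeq HC^n(B,\Fc_\bullet)$. Rewriting both sides via the definition of $HC^n(-,\Fc)$ in $\bimod_\Cc$ then gives $HC^n(A,\Fc)\simeq HC^n(B,\Fc)$, completing the argument.

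The only real point requiring care, and the step I expect to be the main obstacle, is the translation between the language of bimodules over $\Cc$ and the abstract $2$-categorical language of Proposition~\ref{morita1}: one must confirm that composition in $\bimod_\Cc$ is the relative tensor product with the correct handedness, that $\id_A$ as an object of $\Bc_A=\bimod_\Cc(A,A)$ is the regular bimodule $A$, and hence that $q\circ p$ and $p\circ q$ really are $Q\ot_A P$ and $P\ot_A Q$. Everything else is formal once this dictionary is in place; in particular, no new homotopy or exactness argument is needed beyond what Proposition~\ref{morita1} already supplies.
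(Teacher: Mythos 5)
Your proposal is correct and is exactly the paper's argument: the statement is presented there as an immediate consequence of Proposition~\ref{morita1} applied to $\bimod_\Cc$, with $P,Q$ serving as the $1$-morphisms $p,q$ and the left exactness of $\Fc$ used to produce $\Fc_\bullet\in\Zc'_{\bimod_\Cc}(\bimod_\Cc^*)$. No further comment is needed.
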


\subsection{A duality adjunction}
Suppose that $A$ is a unital associative algebra in $\Cc$, let $\Cc^\vee=\bimod_\Cc A$. Let $$F:\bimod_{\Cc^\vee}\rightarrow\bimod_\Cc$$ be the $2$-functor that sends a unital associative algebra $B$ in $\Cc^\vee$ to $F(B)$ (here $F$ is the forgetful functor that forgets $A$-bimodule structure) in $\Cc$ which is also a unital associative algebra.  Since $\bimod_{\Cc^\vee}(B,B')\simeq\bimod_\Cc(F(B),F(B'))$ the $2$-functor $F$ is not only split, but is a fully faithful embedding.

\begin{proposition}\label{dualityadj}
Let $\Fc$ be a left exact $2$-contratrace on $\Cc$, then if $B$ is an admissible unital associative algebra in $\Cc^\vee$, we have $$HC^n_{\Cc^\vee}(B,\Fc_A)\simeq HC_\Cc^n(F(B),\Fc).$$
\end{proposition}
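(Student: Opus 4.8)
The plan is to realize this statement as a direct application of the split-$2$-functor adjunction, Proposition \ref{adj2}, to the forgetful $2$-functor $F:\bimod_{\Cc^\vee}\rightarrow\bimod_\Cc$, once the coefficient objects on the two sides have been matched up. First I would check that $F$ is split. This is essentially already recorded: because $F$ is a fully faithful embedding with $\bimod_{\Cc^\vee}(B,B')\simeq\bimod_\Cc(F(B),F(B'))$, each component functor $F_B:(\bimod_{\Cc^\vee})_B\rightarrow(\bimod_\Cc)_{F(B)}$ is a monoidal equivalence, hence has an exact right adjoint (a quasi-inverse). Since $B$ is assumed admissible in $\bimod_{\Cc^\vee}$, Proposition \ref{adj2} applies and yields $$HC^n(B,F^*\Fc_\bullet)\simeq HC^n(F(B),\Fc_\bullet).$$ The right-hand side is by definition $HC^n_\Cc(F(B),\Fc)$, since $\Fc_\bullet$ is precisely the element of $\Zc'_{\bimod_\Cc}(\bimod_\Cc^*)$ through which $HC^n_\Cc(-,\Fc)$ is defined; so that half requires no further work.

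The crux is then to identify the pulled-back coefficient $F^*\Fc_\bullet$ with $(\Fc_A)_\bullet$, the element of $\Zc'_{\bimod_{\Cc^\vee}}(\bimod_{\Cc^\vee}^*)$ through which $HC^n_{\Cc^\vee}(B,\Fc_A)$ is defined; granting this, the left-hand side becomes $HC^n_{\Cc^\vee}(B,\Fc_A)$ and the proof is complete. It suffices to check this object by object: for a $B$-bimodule $M$ in $\Cc^\vee$ one has, by the definition of pullback along a $2$-functor, $(F^*\Fc_\bullet)_B(M)=\Fc_{F(B)}(F(M))$, so I must show $$\Fc_{F(B)}(F(M))\simeq(\Fc_A)_B(M)$$ naturally in $M$ and compatibly with the flip isomorphisms $\tau$.

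Both sides are subobjects of $\Fc(F(M))$ cut out by equalizer conditions: the left side by the two $F(B)$-actions in $\Cc$, the right side by the two $A$-actions (which define $\Fc_A$ on $\Cc^\vee$) together with the two $B$-actions taken relative to $\ot_A$. Since the unit of the $A$-algebra $B$ provides a map $A\rightarrow F(B)$, the $F(B)$-equalizer condition restricts to the $A$-equalizer condition; and since $\ot_A$ is by definition a cokernel, left exactness of $\Fc$ turns $\Fc$ applied to the quotient $F(M)\ot F(B)\rightarrow F(M\ot_A B)$ into a monomorphism whose image is exactly the $A$-equalized part. Chasing these two facts shows that the $F(B)$-equalizer and the iterated $A$-then-$B$ equalizer define the same subobject; this is nothing but the identification ``$\Fc_y\simeq(\Fc_x)_A$'' already noted earlier in this section, specialized to the present situation.

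I expect this last identification to be the only real obstacle; splitness, the appeal to Proposition \ref{adj2}, and reading off the right-hand side are formal. The care needed is in verifying that the two equalizer descriptions agree not merely as subobjects but as $2$-contratraces, i.e.\ that the flip $\tau$ for $\Fc_{F(B)}$ restricts to the flip for $(\Fc_A)_B$. This is where left exactness of $\Fc$ and the behaviour of $\tau$ under the coequalizer defining $\ot_A$ must be used, exactly as in the construction that produces $\Fc_A\in\Zc'_{\bimod_\Cc A}(\bimod_\Cc A^*)$ from $\Fc$ in the first place.
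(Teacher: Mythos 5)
Your proof is correct and takes essentially the same route as the paper, whose entire proof is the one line ``This is an application of Proposition \ref{adj2} to $F:\bimod_{\Cc^\vee}\rightarrow\bimod_\Cc$'' (with splitness of $F$ already recorded just before the statement). The coefficient identification $F^*\Fc_\bullet\simeq(\Fc_A)_\bullet$ that you rightly flag as the crux is exactly the identification ``$\Fc_y\simeq(\Fc_x)_A$'' the paper establishes in the subsection on adjoint pairs, so your spelling it out via the two equalizer descriptions is a welcome elaboration rather than a divergence.
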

\begin{proof}
This is an application of Proposition \ref{adj2} to $F:\bimod_{\Cc^\vee}\rightarrow\bimod_\Cc$.
\end{proof}

Another point of view is provided in the following.  It is a composition of Lemma \ref{adjlemma} and Proposition \ref{dualityadj}.

\begin{proposition}\label{dualityadj1}
Let $\Bc$ be a $2$-category and $\Fc_\bullet\in\Zc'_{\Bc}(\Bc^*)$.  If $x,y\in\Bc$ and $(p,q)$ is an adjoint pair, then provided that $\Fc_x$ is left exact and $B\in\Bc_y$ is admissible then $\Fc_y$ is left exact and $q\circ B\circ p\in\Bc_x$ is admissible and we have $$HC^n_{\Bc_y}(B,\Fc_y)\simeq HC^n_{\Bc_x}(q\circ B\circ p,\Fc_x).$$
\end{proposition}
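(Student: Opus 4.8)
The plan is to realize this statement as the composition of Lemma~\ref{adjlemma} and Proposition~\ref{dualityadj}, precisely as indicated; the substance already lives in those two results, so the task is to line up their hypotheses and objects. Set $A=q\circ p$. First I would apply Lemma~\ref{adjlemma}: since $(p,q)$ is an adjoint pair, $\Fc_x$ is left exact, and $B\in\Bc_y$ is admissible, it yields
$$HC^n_{\Bc_y}(B,\Fc_y)\simeq HC^n_{\bimod_{\Bc_x}A}(q\circ B\circ p,(\Fc_x)_A).$$
This is where the adjoint-pair hypothesis does its work: the extra exactness built into the definition of an adjoint pair makes $q\circ-\circ p:\Bc_y\rightarrow\bimod_{\Bc_x}A$ a monoidal equivalence with inverse $p\ot_A-\ot_A q$, under which $\Fc_y$ is identified with $(\Fc_x)_A$. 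Because $(\Fc_x)_A$ is again left exact whenever $\Fc_x$ is (as recorded in the construction of $\Fc_A$), this identification also gives the asserted left exactness of $\Fc_y$. Moreover, for the right-hand side of the displayed isomorphism to be defined at all, the lemma already delivers the admissibility of $q\circ B\circ p$ as an object of $\bimod_{\Bc_x}A$, transported across the equivalence from the admissibility of $B$.

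Next I would put $\Cc=\Bc_x$, so that $\Cc^\vee=\bimod_{\Bc_x}A$ in the notation of the duality-adjunction subsection, and apply Proposition~\ref{dualityadj} with coefficient $\Fc=\Fc_x$ (hence $\Fc_A=(\Fc_x)_A$) to the admissible algebra $q\circ B\circ p$ in $\Cc^\vee$. This produces
$$HC^n_{\bimod_{\Bc_x}A}(q\circ B\circ p,(\Fc_x)_A)\simeq HC^n_{\Bc_x}(F(q\circ B\circ p),\Fc_x),$$
and, through its underlying use of Proposition~\ref{adj2} and Lemma~\ref{adj1} (where admissibility of the source passes to the target of a split $2$-functor), it simultaneously certifies that $F(q\circ B\circ p)$ is admissible in $\Bc_x$. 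As $F$ merely forgets the $A$-bimodule structure, $F(q\circ B\circ p)$ is the object $q\circ B\circ p$ of $\Bc_x$ with its evident algebra structure, which is exactly the admissibility and the right-hand term claimed. Composing the two displayed isomorphisms then finishes the proof.

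The main (and only genuine) obstacle is the bookkeeping that makes this composition legitimate rather than merely formal: one must check that the coefficient written $(\Fc_x)_A$ in Lemma~\ref{adjlemma} is literally the $\Fc_A$ appearing in Proposition~\ref{dualityadj}, and that the algebra structure on $q\circ B\circ p$ produced by the equivalence $q\circ-\circ p$ agrees, after forgetting via $F$, with the algebra structure carried by $q\circ B\circ p$ as an object of $\Bc_x$. Both are immediate from the definitions of $\Fc_A$ and of the forgetful $2$-functor, but verifying them is what licenses reading the intermediate term $HC^n_{\bimod_{\Bc_x}A}(q\circ B\circ p,(\Fc_x)_A)$ as the common value of the two sides.
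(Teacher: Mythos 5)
Your proposal is correct and follows exactly the route the paper intends: the paper itself presents Proposition \ref{dualityadj1} as "a composition of Lemma \ref{adjlemma} and Proposition \ref{dualityadj}," and you carry out precisely that composition, with the same intermediate term $HC^n_{\bimod_{\Bc_x}A}(q\circ B\circ p,(\Fc_x)_A)$. The extra bookkeeping you flag (matching $(\Fc_x)_A$ with $\Fc_A$ and the algebra structures before and after forgetting) is a reasonable elaboration of what the paper leaves implicit.
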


\begin{remark}\rm{
If on the other hand $(p,q)$ is not an adjoint pair, but instead is a Morita pair, i.e, provides a Morita equivalence between $x$ and $y$ in $\Bc$, then we have the following.  The algebra $B\in\Bc_y$ is admissible if and only if $q\circ B\circ p\in\Bc_x$ is admissible, $D\in\Bc_x$ is admissible if and only if $p\circ D\circ q\in\Bc_y$ is admissible, and $\Fc_x$ is left exact if and only if $\Fc_y$ is left exact. Thus we have completely symmetrically, unlike the case of the Proposition \ref{dualityadj1}, $$HC^n_{\Bc_y}(B,\Fc_y)\simeq HC^n_{\Bc_x}(q\circ B\circ p,\Fc_x)$$ and $$HC^n_{\Bc_x}(D,\Fc_x)\simeq HC^n_{\Bc_y}(p\circ D\circ q,\Fc_y).$$
}\end{remark}

\subsection{2-Morita invariance}\label{2moritainvariance}
\begin{definition}\label{2moritadef}
We say that two monoidal categories $\Cc$ and $\Dc$ are $2$-Morita equivalent if $\bimod_\Cc\simeq\bimod_\Dc$.  More precisely, $\Dc\simeq\bimod_\Cc A$ for some unital associative $A\in\Cc$ and there is a unital associative algebra $S\in\bimod_\Cc A$ such that $S$ is Morita equivalent to $\id$ in $\Cc$, i.e., exist bimodules $P$ and $Q$ such that $P\ot Q\simeq S$ and $Q\ot_S P\simeq\id$ in $\Cc$.
\end{definition}

It is immediate from the definition and our prior efforts that we have a bijection of algebras and left exact symmetric $2$-contratraces between those of $\Cc$ and those of $\Dc$ and that this bijection preserves cyclic cohomology.  However we would like to spell out the correspondence more explicitly below.

\begin{proposition}\label{2morita}
Suppose $\Cc$ and $\Dc$ are $2$-Morita equivalent.  Identify $\Dc$ with $\bimod_\Cc A$, and let $(P,Q)$ be the Morita pair from Definition \ref{2moritadef}.  Let $D$ be admissible in $\Cc$ and let $\Fc$ be a left exact symmetric $2$-contratrace, then $$HC^n_\Cc(D,\Fc)\simeq HC^n_{\bimod_\Cc A}(P\ot D\ot Q,\Fc_A).$$
\end{proposition}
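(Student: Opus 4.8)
The plan is to push the right-hand side down to $\Cc$ by the duality adjunction and then to move it onto $D$ by a Morita equivalence internal to $\Cc$. First I would apply Proposition \ref{dualityadj} with $\Cc^\vee=\bimod_\Cc A=\Dc$ and $B=P\ot D\ot Q$, obtaining $$HC^n_{\bimod_\Cc A}(P\ot D\ot Q,\Fc_A)\simeq HC^n_\Cc(F(P\ot D\ot Q),\Fc),$$ where $F$ forgets the $A$-bimodule structure. The hypothesis this needs is that $P\ot D\ot Q$ be admissible in $\bimod_\Cc A$; I would defer this to the end, deducing it from the admissibility of $D$ once the Morita equivalence below is in hand.

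The core of the argument is the second step: identifying the underlying $\Cc$-algebra $E:=F(P\ot D\ot Q)$ and showing it is Morita equivalent to $D$. The point is the multiplication on $E$, which is induced by $m_D$ together with a contraction $Q\ot P\to\id$; this contraction is the map $Q\ot P\to Q\ot_S P\simeq\id$ furnished by the Morita datum of Definition \ref{2moritadef}, into which the algebra structure of $S=P\ot Q$ is built. With this understood, I would exhibit the Morita bimodules $P\ot D$ and $D\ot Q$ and verify the two defining isomorphisms: $(P\ot D)\ot_D(D\ot Q)\simeq P\ot D\ot Q=E$ as $E$-bimodules, which is immediate, and $(D\ot Q)\ot_E(P\ot D)\simeq D$ as $D$-bimodules, where the relation $Q\ot_S P\simeq\id$ does the work by collapsing the inner $Q\ot P$.

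Granting the Morita equivalence, Proposition \ref{cyclicismoritainvar} yields $HC^n_\Cc(E,\Fc)\simeq HC^n_\Cc(D,\Fc)$ from the admissibility of $D$, and chaining this with the first displayed isomorphism proves the claim. The admissibility bookkeeping closes here as well: $D$ admissible forces $E$ admissible (Proposition \ref{cyclicismoritainvar}, or equivalently Proposition \ref{morita1}), and since $F$ is a fully faithful embedding the monoidal category of $(P\ot D\ot Q)$-bimodules in $\bimod_\Cc A$ is equivalent to that of $E$-bimodules in $\Cc$, so admissibility transfers back to $P\ot D\ot Q$. The one delicate point, and the main obstacle I anticipate, is the second step: one must check that the contraction built into $E$ is the finer $Q\ot_S P\simeq\id$ rather than the coarser $Q\ot_A P$, and then carry out the Morita bookkeeping carefully enough that the middle $Q\ot P$ really collapses to $\id$, so that one lands on $D$ itself and not on $D\ot D$.
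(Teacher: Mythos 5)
Your proposal is correct in outline but follows a genuinely different route from the paper. The paper never leaves the $2$-categorical formalism: it sets up the triangle $x=\id$, $y=A$, $z=S$ in $\Bc=\bimod_\Cc$ with the adjoint pairs $(A^l,A^r)$, $(S^l,S^r)$ and the Morita pair $(P,Q)$, and then chains $HC^n_{\Bc_x}(D,\Fc_x)\simeq HC^n_{\Bc_z}(P\ot D\ot Q,\Fc_z)\simeq HC^n_{\Bc_y}(\cdots,\Fc_y)\simeq\cdots$ using Proposition \ref{dualityadj1}, its Morita-pair remark, and Lemma \ref{adjlemma}; every step is a formal consequence of results already proved, and in particular the passage from $D$ to $P\ot D\ot Q$ happens as a conjugation by the Morita pair \emph{between the hom-categories $\Bc_x$ and $\Bc_z$}, never as a Morita equivalence of two algebras inside $\Cc$ itself. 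Your route instead collapses the descent into a single application of Proposition \ref{dualityadj} and then must prove the stronger intermediate claim that $E=F(P\ot D\ot Q)$ is Morita equivalent to $D$ in $\Cc$ via $P\ot D$ and $D\ot Q$. That claim is true (it is the monoidal-category version of $M_n(D)\sim_{\mathrm{Mor}}D$, and your identification of the multiplication on $E$ through the contraction $Q\ot P\to Q\ot_S P\simeq\id$ is the right one), and your admissibility bookkeeping via the full faithfulness $\bimod_{\Cc^\vee}(B,B')\simeq\bimod_\Cc(F(B),F(B'))$ is sound. What your approach costs is exactly the step you flag and do not carry out: verifying $(D\ot Q)\ot_E(P\ot D)\simeq D$, i.e., that the coequalizer defining $\ot_E$ really collapses the inner $Q\ot P$ to $\id$. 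This requires the full Morita-context yoga (the compatibility of the pairing $P\ot Q\simeq S$ with $Q\ot_S P\simeq\id$, so that $S\to E$ is an algebra map and the standard dual-basis argument transports), which is genuine work in an abstract abelian monoidal category; the paper's softer chain of adjunctions is designed precisely to avoid ever proving it. So: acceptable as an alternative strategy, but the proof is incomplete until that lemma is written out.
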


\begin{proof}
Let $\Bc$ be a $2$-category. Consider the following diagram in $\Bc$:
$$\xymatrix{
x\ar[dr]_{p''}\ar[rr]^p & & y\ar[dl]^{p'}\\
&z &
}$$ with omitted $q$'s pointing in the opposite direction to the corresponding $p$'s.  Let $(p,q)$ and $(p',q')$ be adjoint pairs and $(p'',q'')$ be a Morita pair.  Let $D$ be an algebra in $\Bc_x$ and let $\Fc_x$ be left exact, then \begin{align*}
HC^n_{\Bc_x}(D,\Fc_x)&\simeq HC^n_{\Bc_z}(p''\circ D\circ q'',\Fc_z)\\
&\simeq  HC^n_{\Bc_y}(q'\circ p''\circ D\circ q''\circ p',\Fc_y)\\
&\simeq  HC^n_{\Bc_x}(q\circ q'\circ p''\circ D\circ q''\circ p'\circ p,\Fc_x).\\
\end{align*}
Now let $\Bc=\bimod_\Cc$, $x=\id$, $y=A$, $z=S$, $(p,q)=(A^l,A^r)$, $(p',q')=(S^l,S^r)$ and $(p'',q'')=(P,Q)$, then the latter cohomology is isomorphic to $HC^n_{\bimod_{\Bc_x}A}(q\circ q'\circ p''\circ D\circ q''\circ p'\circ p,(\Fc_x)_A)$ which in turn is isomorphic to the desired cohomology.

\end{proof}

A wealth of examples of $2$-Morita equivalent monoidal categories can be found among fusion categories \cite{EGNO}.  More precisely any two fusion categories that are dual, are $2$-Morita equivalent.  More generally, if $\Cc$ is rigid we have:

\begin{lemma}
If $\Cc$ is rigid with $\id$ simple and projective, then for any algebra $A$ in $\Cc$, we have that $\Cc$ is $2$-Morita equivalent to $\bimod_\Cc A$.
\end{lemma}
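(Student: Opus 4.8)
The plan is to verify Definition \ref{2moritadef} directly, taking $\Dc=\bimod_\Cc A$, so that the first requirement $\Dc\simeq\bimod_\Cc A$ holds tautologically; all the content is to produce a unital associative algebra $S\in\bimod_\Cc A$ whose image under the forgetful functor $F$ (forgetting the $A$-bimodule structure, which sends algebras to algebras as recalled in the duality-adjunction subsection) is Morita equivalent to $\id$ in $\Cc$. We may assume $A\neq 0$, since for $A=0$ the category $\bimod_\Cc A$ is the zero category and there is nothing to prove. The algebra I would take is the internal endomorphism algebra of the regular right $A$-module $A$, formed inside $\bimod_\Cc A$: concretely, using rigidity, $S=A\ot A^*$ with product $\id_A\ot\mathrm{ev}\ot\id_{A^*}$ and unit $\mathrm{coev}$, where $\mathrm{ev}\colon A^*\ot A\to\id$ and $\mathrm{coev}\colon\id\to A\ot A^*$ are the duality maps; its $A$-bimodule structure is left multiplication on the first tensorand together with the right $A$-action on $A^*$ dual to the regular left action on $A$. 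A routine check (associativity and unitality from the zig-zag identities) shows $S$ is a unital associative algebra in $\bimod_\Cc A$, and, since $F$ merely forgets the $A$-bimodule structure, $F(S)=\underline{\End}(A)=(A\ot A^*,\ \id_A\ot\mathrm{ev}\ot\id_{A^*},\ \mathrm{coev})$ as an algebra in $\Cc$.

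For the Morita equivalence $F(S)\sim\id$ I would take $P=A$, made a left $S$-module by $\id_A\ot\mathrm{ev}\colon S\ot A=A\ot A^*\ot A\to A$, and $Q=A^*$, made a right $S$-module by $\mathrm{ev}\ot\id_{A^*}\colon A^*\ot S=A^*\ot A\ot A^*\to A^*$. Then $P\ot Q=A\ot A^*=S$, and one checks immediately that the induced $S$-bimodule structure is the regular one, so $P\ot Q\simeq S$ as $S$-bimodules. It remains to prove $Q\ot_S P=A^*\ot_S A\simeq\id$, which is the crux.

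By definition of $\ot_S$, the object $A^*\ot_S A$ is the cokernel of the difference of the right and left actions, i.e. of
\[
\mathrm{ev}\ot\id_{A^*\ot A}-\id_{A^*\ot A}\ot\mathrm{ev}\colon\ A^*\ot A\ot A^*\ot A\to A^*\ot A.
\]
Since $\mathrm{ev}$ evaluates both copies of $A^*\ot A$, in either order, it coequalizes the two maps and descends to $\overline{\mathrm{ev}}\colon A^*\ot_S A\to\id$. Here the hypotheses enter: as $A\neq 0$, the zig-zag identities force $\mathrm{ev}\neq 0$, and since $\id$ is \emph{simple} the image of $\mathrm{ev}$ is all of $\id$, so $\mathrm{ev}$, hence $\overline{\mathrm{ev}}$, is epic; since $\id$ is \emph{projective}, $\mathrm{ev}$ admits a section $s\colon\id\to A^*\ot A$ with $\mathrm{ev}\,s=\id_\id$, and then $\id_{A^*\ot A}-s\,\mathrm{ev}$ is the idempotent with image $\Ker\mathrm{ev}$. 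The key identity is
\[
\bigl(\mathrm{ev}\ot\id_{A^*\ot A}-\id_{A^*\ot A}\ot\mathrm{ev}\bigr)\circ\bigl(s\ot\id_{A^*\ot A}\bigr)=\id_{A^*\ot A}-s\,\mathrm{ev},
\]
which exhibits $\Ker\mathrm{ev}=\mathrm{im}(\id_{A^*\ot A}-s\,\mathrm{ev})$ as contained in the image of the difference map; the reverse inclusion is automatic because $\mathrm{ev}$ coequalizes, so the image of the difference map is exactly $\Ker\mathrm{ev}$. Therefore $A^*\ot_S A=(A^*\ot A)/\Ker\mathrm{ev}\xrightarrow{\ \overline{\mathrm{ev}}\ }\id$ is an isomorphism.

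Both clauses of Definition \ref{2moritadef} are then satisfied, so $\Cc$ is $2$-Morita equivalent to $\bimod_\Cc A$. I expect the identification $A^*\ot_S A\simeq\id$ of the previous paragraph to be the only real obstacle: it is the single place where simplicity of $\id$ is used (to make $\mathrm{ev}$ an epimorphism onto $\id$) and where projectivity of $\id$ is used (to split it), and it is precisely what would fail without these hypotheses. A secondary, purely bookkeeping, obstacle is the verification that $S=A\ot A^*$ genuinely assembles into an algebra object of $\bimod_\Cc A$ with $F(S)=\underline{\End}(A)$; one may alternatively phrase $S$ as the internal endomorphism object of $A$ in the $\Cc$-module category of right $A$-modules and invoke the standard internal-Hom formalism. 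Finally, via the fully faithful embedding of the duality-adjunction subsection one has $\bimod_{\bimod_\Cc A}(S,S)\simeq\bimod_\Cc(F(S),F(S))\simeq\bimod_\Cc\id=\Cc$, matching the symmetric expectation that $\Cc$ is recovered as bimodules over $S$.
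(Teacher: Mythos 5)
Your proof is correct and follows essentially the same route as the paper: $S=A\ot A^*$ with the Morita pair $(A,A^*)$, evaluation surjective because $\id$ is simple and split because $\id$ is projective, with the splitting identifying the coequalizer $A^*\ot_S A$ with $\id$ --- your displayed key identity is exactly the contracting homotopy the paper invokes when it says the splitting of $d$ ``creates a homotopy as before.'' One small caveat: your parenthetical dismissal of $A=0$ (``nothing to prove'') is backwards, since for nontrivial $\Cc$ the statement would actually \emph{fail} for $A=0$; but the paper likewise tacitly assumes $A\neq 0$ (needed for $\mathrm{ev}\neq 0$), so this is a shared degenerate edge case rather than a gap in your argument.
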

\begin{proof}
Let $A\in\Cc$ be a unital associative algebra, then $A\ot A^*$ is a unital associative algebra in $\bimod_\Cc A$.  We claim that $A\ot A^*$ is Morita equivalent to $\id$ in $\Cc$.  Consider $A\in\bimod_\Cc(A\ot A^*,\id)$ and $A^*\in\bimod_\Cc(\id,A\ot A^*)$, then $A\ot_\id A^*\simeq A\ot A^*$.  Let $P=A^*\ot A$ let $d$ denote the evaluation map to $\id$, it is surjective since $\id$ is simple.  Then $A^*\ot_{A\ot A^*}A\simeq\id$ if $$P^{\ot 2}\rightarrow P\rightarrow\id\rightarrow 0$$ is exact.  Since $\id$ is projective so $d$ splits creating a homotopy as before.
\end{proof}

If $H$ is a finite dimensional Hopf algebra, then its category of left modules is $2$-Morita equivalent to its category of right comodules, or alternatively to the category of left modules over the dual Hopf algebra $H^*$.  If $H$ is not finite dimensional then this is no longer true, but an adjunction does exist and is interesting.

\subsection{A natural example of adjunction}\label{natadj}
We can consider a natural example of the Proposition \ref{dualityadj} above.  Let $\Cc=\Mc^H$ and $A=H$ then by the Fundamental theorem of Hopf modules we have $\Cc^\vee\simeq \hmod$.  Let $B$ be an $H$-module algebra and note that $F(B)$ is $B\rtimes H$ as an $H$-comodule algebra.  Thus if $\Fc(-)=\Hom(-,M)^H$ there exists a left exact $2$-contratrace $\Fc_H$ on $\hmod$ such that $$HC^n(B,\Fc_H)\simeq HC^n(B\rtimes H, \Fc)$$ where $M\in{_H SAYD^H}$ and the right hand side is the $B$-type theory of \cite{hkrs}.  Note that we need the Lemma below that ensures that any $B$ is admissible.

\begin{lemma}\label{admissiblehmod}
Let $\Cc={_\Hc\Mc}$ where $\Hc$ is a weak Hopf algebra with a separable base $R$, then any $A$ unital associative is admissible.
\end{lemma}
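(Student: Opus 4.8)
The plan is to exhibit, for an arbitrary unital associative algebra $A$ in $\Cc={_\Hc\Mc}$, an admissible pair in $\bimod_\Cc A$. Unwinding the definitions, the monoidal unit of $\bimod_\Cc A$ is $A$ itself and the tensor product is $\ot_A$, so what I need is a projective $A$-bimodule $P$ in $\Cc$ together with an $A$-bimodule map $d\colon P\to A$ for which $P\ot_A P\xrightarrow{d\ot Id-Id\ot d}P\xrightarrow{d}A\to 0$ is exact. Note that once $d$ is $A$-bilinear the ``complex'' condition $d\circ(d\ot Id)=d\circ(Id\ot d)$ is automatic (both composites send $x\ot y\mapsto d(x)d(y)$ via the unit constraints), so the real content is surjectivity of $d$, projectivity of $P$, and exactness in the middle.

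First I would build the candidate from the regular representation. Write $R=\Hc_t$ for the base, so that the monoidal product on $\Cc$ is $\ot_R$ and $\id_\Cc=R$. The free module $\Hc$ is projective in $\Cc$, and the target counital map $\epsilon_t\colon\Hc\to R$ is an $R$-bimodule map (for the unit-constraint $R$-actions, so that it descends to $\ot_R$) and an $\Hc$-linear surjection onto the unit. I then set $P=A\ot_R\Hc\ot_R A$, the free $A$-bimodule on $\Hc\in\Cc$, with $A$ acting through the outer factors, and define $d$ as the composite $A\ot_R\Hc\ot_R A\xrightarrow{Id\ot\epsilon_t\ot Id}A\ot_R R\ot_R A\simeq A\ot_R A\xrightarrow{m}A$. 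This $d$ is $A$-bilinear and $\Hc$-linear (as $\epsilon_t$ and $m$ are), hence a morphism in $\bimod_\Cc A$, and it is surjective since $d(1_A\ot 1_\Hc\ot 1_A)=1_A$. Projectivity of $P$ is formal: the free-bimodule functor is left adjoint to the exact forgetful functor $\bimod_\Cc A\to\Cc$, so $\Hom_{\bimod_\Cc A}(P,-)\simeq\Hom_\Cc(\Hc,-)$ is exact because $\Hc$ is projective in $\Cc$.

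The remaining and genuinely hard point is exactness in the middle, and this is exactly where separability of $R$ is indispensable. My plan is to produce a contracting homotopy for the bar complex by the usual device of inserting the units of $\Hc$ and of $A$; the difficulty is that the naive insertion is only $k$-linear, whereas I need a morphism that is well defined over the balanced tensor products $\ot_R$ and $\ot_A$. Since $R$ is separable, $R^e=R\ot_k R^{op}$ is separable as well, so the category ${_R\Mc_R}$ of $R$-bimodules is semisimple; in particular $\ot_R$ is exact and every object is projective over $R$. I would use the separability idempotent $e=e^{(1)}\ot e^{(2)}\in R\ot_k R$ to correct the $k$-linear homotopy into an $R$-bilinear one, i.e.\ to average over $R$ so that the contraction descends to $\ot_R$. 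Because the forgetful functor $\Cc\to{_R\Mc_R}$ is exact and reflects exactness, it then suffices to check exactness after restricting to the semisimple category ${_R\Mc_R}$, where the idempotent-corrected homotopy built from the units does the job. This yields exactness of $P\ot_A P\to P\to A\to 0$ and completes the admissible pair.

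Thus the main obstacle is not the construction of $P$ and $d$, which is formal, but the verification that the contracting homotopy respects the $R$-balancing and the $\Hc$-action; separability of $R$, through the idempotent $e$ and the resulting exactness of $\ot_R$, is precisely what removes it. Everything else is standard bar-resolution bookkeeping, and the same argument with $A=\id$ (so that $P\simeq\Hc$ and $d=\epsilon_t$) shows in particular that $\Cc$ itself is admissible.
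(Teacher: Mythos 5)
Your construction coincides with the paper's: the admissible pair is $P=A\ot_R\Hc\ot_R A$ with $d$ built from the counit map $\Hc\to R=\id$ and the multiplication of $A$, and projectivity of $P$ in $\bimod_\Cc A$ follows from the free--forgetful adjunction $\Hom_{\bimod_\Cc A}(A\ot_R X\ot_R A,-)\simeq\Hom_\Cc(X,-)$ together with projectivity of $\Hc$ in $\Cc$, exactly as you argue. Where you diverge is on the middle exactness, and there your diagnosis of the difficulty is off. The paper's contracting homotopy is simply $\alpha\mapsto\widetilde 1\ot_A\alpha$ for any preimage $\widetilde 1$ of $1$ under $d$ (e.g.\ $1_A\ot 1_\Hc\ot 1_A$), giving $d(\widetilde 1\ot_A\alpha)=\alpha-\widetilde 1\ot_A d\alpha$ and hence $\ker d\subseteq\mathrm{im}(d\ot Id-Id\ot d)$. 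This homotopy only needs to be $k$-linear: insertion of a fixed element on the \emph{left} automatically descends to the balanced tensor products $\ot_R$ and $\ot_A$, since the balancing relations internal to $\alpha$ are untouched, and because the forgetful functor $\Cc\to\vect$ is exact and reflects exactness, a $k$-linear contraction of the underlying complex of vector spaces already proves exactness in $\Cc$. So there is no well-definedness obstruction to remove, and the separability idempotent is not what makes this step work; your idempotent-averaged homotopy would also close (since $e^{(1)}e^{(2)}=1$ and $d$ is $R$-linear), but it is an unnecessary detour which you in any case only sketch. Separability of $R$ earns its keep elsewhere: it makes $\ot_R$ exact and the internal Homs $\Hom^r_R(W,-)$ exact, which is what the paper uses to show that \emph{all} the higher powers $P^{\ot_A i}$ are projective --- not needed for admissibility itself, but essential for the derived-functor interpretation in Lemma \ref{derint}. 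With the exactness step replaced by the elementary insertion homotopy, your argument is complete and matches the paper's.
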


\begin{proof}
We need to construct an admissible pair in $\bimod_\Cc A=:\Dc$.  Note that any complex obtained from a surjective $d:P\rightarrow A$ as follows is a resolution. Consider $$(P^{\ot_A n+1},d=\sum_{i=0}^n(-1)^i\delta_i'),$$ where $d: P\rightarrow A$ and $\delta_i'=Id^{\ot_A i}\ot_A d\ot_A Id^{\ot_A n-i}$.  Indeed, let $\widetilde{1}$ be such that $d(\widetilde{1})=1$ and observe that $d(\widetilde{1}\ot_A\alpha)=\alpha-\widetilde{1}\ot_A d\alpha$.  For projectivity we proceed as follows.  Let $Q\rightarrow \id$ be a surjection of a projective $Q$ onto $\id$ in $\Cc$; we can take $Q=\Hc$ with the counit map.

Then $P=A\ot_R Q\ot_R A\rightarrow A$ in $\Dc$ is exactly what we need.  Namely, $P$ is projective in $\Dc$, surjects onto the unit, and $P^{\ot_A i}=A\ot_R(Q\ot_R A)^{\ot_R i}$ is projective in $\Dc$ if $(Q\ot_R A)^{\ot_R i-1}\ot_R Q$ is projective in $\Cc$.  But for any $W$ in ${_\Hc\Mc}$, we have that  $W\ot_R Q$ is projective in ${_\Hc\Mc}$ if $Q$ is.  This follows from the observation that $\Hom_\Hc(W\ot_R Q,-)$ is a composition of the internal Hom, namely $\Hom^r_R(W,-)$ and $\Hom_{\Hc}(Q,-)$ both of which are exact.  See Section \ref{adjactionsection} for a discussion of internal Homs.
\end{proof}

There is no guarantee that $(A\ot A,m)$ is any longer admissible, and thus that the old definition of cyclic cohomology will agree with the invariant one. On the other hand, from the proof above we see that the entire resolution consists of projective objects and so the new definition will have a derived functor interpretation of the Hochschild part of the cohomology.  More precisely, we have:

\begin{lemma}\label{derint}
Consider $\Cc={_\Hc\Mc}$ where $\Hc$ is a weak Hopf algebra with a separable base $R$, let $A$ be a unital associative algebra, and $\Fc$ a left exact symmetric $2$-contratrace, then $\Fc_A$ is a left exact functor $\Fc_A:(\bimod_\Cc A)^{op}\rightarrow\vect$ and $$HH^n(A,\Fc)\simeq R^n\Fc_A(A).$$
\end{lemma}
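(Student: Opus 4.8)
The plan is to realize the Hochschild cochain complex computing $HH^n(A,\Fc)$ as the result of applying the contravariant functor $\Fc_A$ to an explicit projective resolution of the unit object $A$ in $\bimod_\Cc A$, so that its cohomology is, by definition, the right derived functor $R^n\Fc_A$ evaluated at that unit. The first assertion, that $\Fc_A$ is left exact, is essentially already in hand: $\Fc_A$ was introduced as the equalizer of the two $A$-actions on $\Fc$, and left exactness of $\Fc$ together with $\Fc\in\Zc'_\Cc\Cc^*$ forces $\Fc_A$ to be a left exact functor $(\bimod_\Cc A)^{op}\to\vect$. Its right derived functors $R^n\Fc_A$ then exist because $\bimod_\Cc A$ has enough projectives (inherited from $\Cc$ via the free construction $A\ot_R-\ot_R A$ used in Lemma \ref{admissiblehmod}), and may be computed from any projective resolution.

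Next I would extract more from Lemma \ref{admissiblehmod} than the bare admissibility statement. With the specific choice $P=A\ot_R Q\ot_R A$ (e.g. $Q=\Hc$ with the counit, or any projective surjecting onto $\id$), its proof shows that the bar-type complex $$\cdots\to P^{\ot_A 3}\to P^{\ot_A 2}\to P\xrightarrow{d}A\to 0,$$ with differential $\sum_i(-1)^i\delta_i'$ and $\delta_i'=Id^{\ot_A i}\ot_A d\ot_A Id^{\ot_A n-i}$, is exact (via the contracting homotopy $\widetilde{1}\ot_A-$) and has every term $P^{\ot_A m}$ projective in $\bimod_\Cc A$. Hence it is a genuine projective resolution of the unit object $A$, not merely a two-term admissible pair; this is exactly the extra content flagged just before the statement.

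The final step is to match complexes. Applying Definition \ref{precocyclicstructure} to $\Dc=\bimod_\Cc A$ and the datum $d\colon P\to A$, the precocyclic object has $C^n=\Fc_A(P^{\ot_A n+1})$ and cofaces $\delta_i=\Fc_A(\delta_i')$, so its Hochschild coboundary $b=\sum_{i=0}^{n+1}(-1)^i\delta_i\colon C^n\to C^{n+1}$ is precisely $\Fc_A$ applied to the resolution differential $P^{\ot_A n+2}\to P^{\ot_A n+1}$. Thus $C^\bullet(d,\Fc_A)$ is term-for-term and differential-for-differential the complex $\Fc_A$ of the deleted resolution above, and since $\Fc_A$ is contravariant left exact and the resolution is projective, $HH^n(A,\Fc)=H^n\bigl(C^\bullet(d,\Fc_A)\bigr)\simeq R^n\Fc_A(A)$.

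The point needing the most care is not a hard estimate but the bookkeeping: one must check that the index range and signs of $\sum(-1)^i\delta_i$ in the cochain complex coincide with those of $\sum(-1)^i\delta_i'$ in the bar resolution, and in particular that no ``flipover coface'' intervenes — which, as the remark in Section \ref{precocyclic} stresses, is exactly why the present construction has none. The only genuinely nontrivial input is the all-degree projectivity and exactness of the resolution, and that is supplied by Lemma \ref{admissiblehmod}; resolution-independence of the resulting derived functor is then the standard consequence of $\bimod_\Cc A$ having enough projectives.
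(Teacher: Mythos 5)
Your proposal is correct and follows essentially the same route as the paper, which does not even write out a separate proof but explicitly points back to the proof of Lemma \ref{admissiblehmod}: the bar-type resolution $P^{\ot_A \bullet+1}\to A$ built there is projective in every degree, $\Fc_A$ applied to its deleted part is exactly the Hochschild cochain complex of $C^\bullet(d,\Fc_A)$, and left exactness of $\Fc_A$ was established when $\Fc_A$ was defined as an equalizer. Your extra bookkeeping about the cofaces and the absence of a flipover coface is consistent with the construction in Section \ref{precocyclic} and fills in what the paper leaves implicit.
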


\begin{remark}\rm{
For the case above, namely when $\Hc$ is a weak Hopf algebra with a separable base, the pair $(A\ot A, m)$ is admissible in $\bimod_\Cc A$ for all $A\in\Cc$ if and only if $\id\in\hmod$ is projective. By the existence of exact internal Homs, the latter happens if and only if everything is projective, which is equivalent to $\hmod$ being semi-simple.  This is not to say that there could not be an $A$ for which $(A\ot A, m)$ is admissible in $\bimod_\Cc A$ even if $\id\in\Cc$ is not projective.
}\end{remark}

But we digress, let us get back to our left exact symmetric $2$-contratrace on $\hmod$, that we called $\Fc_H$ above.  Recall that it is obtained from $\Hom(-,M)^H$ on $\Mc^H$ via adjunction.  We can describe $\Fc_H$ quite explicitly.  If $M\in{_H SAYD^H}$ then $\Hom(-,M)^H$ obtains a symmetric structure via $$\tau f(v\ot w)=S^{-1}(w_1)f(w_0\ot v).$$  Let $V\in\hmod$, then $$\Fc_H(V)=\{f\in \Hom(V\ot H,M)^H| f(v\ot xy)=S^{-1}(y^3)f(y^1 v\ot y^2 x)\}.$$  Let $\widehat{M}=\Hom(H,M)^H$ and define a left $H$-module structure on it by $$y\cdot\varphi(-)=y^2\varphi(S(y^3)-y^1)$$ so that we have $$\Fc_H(V)=\Hom_H(V,\widehat{M}).$$  The symmetric structure is as follows: $$\tau\varphi(w\ot v)(h)=\varphi(S^{-1}(h^1)v\ot w)(h^2).$$

Compare this with the ``usual" way of getting a symmetric left exact $2$-contratrace from an SAYD $M$, namely $$\Fc'(V)=\Hom_H(M\ot V,\id)\simeq\Hom_H(V,{^*M})$$ that yields the type $A$ theory on $\hmod$.  Note that the two ways are not the same, they are in some sense dual to each other.  On the other hand, if we consider ${_{ad}H^\triangle}\in{_H SAYD^H}$ with the action $hx=h^2 xS(h^1)$ and the coaction given by the comultiplication $\triangle$, then a straightforward computation shows that as $2$-contratraces we have $$\Hom_H({_{ad}H^\triangle}\ot -,\id)\simeq\Hom_H(-,\widehat{{_{ad}H^\triangle}}),$$ so that:

\begin{lemma}\label{duallemma1}
Let $B$ be an $H$-module algebra then $$HC^n_H(B,{_{ad}H^\triangle})\simeq HC^{n,H}(B\rtimes H,{_{ad}H^\triangle}).$$
\end{lemma}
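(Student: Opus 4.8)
The plan is to recognize Lemma \ref{duallemma1} as a special instance of the adjunction machinery developed throughout the preceding sections, applied to the concrete setup described in Section \ref{natadj}. Specifically, we take $\Cc=\Mc^H$, $A=H$, so that by the Fundamental theorem of Hopf modules $\Cc^\vee=\bimod_\Cc H\simeq\hmod$, and we use the displayed isomorphism of $2$-contratraces
$$\Hom_H({_{ad}H^\triangle}\ot -,\id)\simeq\Hom_H(-,\widehat{{_{ad}H^\triangle}})$$
that was established by direct computation just before the statement. The two sides of this isomorphism are precisely the type $A$ contratrace $\Fc'$ built from the SAYD module ${_{ad}H^\triangle}$, and the contratrace $\Fc_H$ obtained by adjunction from $\Hom(-,M)^H$ on $\Mc^H$ with $M={_{ad}H^\triangle}$.

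First I would invoke Proposition \ref{dualityadj} (equivalently the concrete instance worked out in Section \ref{natadj}) with $\Fc(-)=\Hom(-,{_{ad}H^\triangle})^H$ on $\Mc^H$ and $B$ an $H$-module algebra viewed as an object of $\Cc^\vee\simeq\hmod$. This yields
$$HC^n_{\hmod}(B,\Fc_H)\simeq HC^n_{\Mc^H}(B\rtimes H,\Fc),$$
where $F(B)=B\rtimes H$ is the forgetful image of $B$ as an $H$-comodule algebra, and the right-hand side is by construction the type $B$ theory of \cite{hkrs}, here written $HC^{n,H}(B\rtimes H,{_{ad}H^\triangle})$. Next I would substitute the identification of contratraces above: since $\Fc_H\simeq\Hom_H(-,\widehat{{_{ad}H^\triangle}})\simeq\Hom_H({_{ad}H^\triangle}\ot -,\id)$, the left-hand cyclic cohomology $HC^n_{\hmod}(B,\Fc_H)$ is exactly the type $A$ theory $HC^n_H(B,{_{ad}H^\triangle})$. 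Composing the two identifications gives the claimed isomorphism.

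Two technical points must be checked to license the application. First, Proposition \ref{dualityadj} requires that $B$ be admissible as an object of $\Cc^\vee=\bimod_\Cc H\simeq\hmod$; this is supplied by Lemma \ref{admissiblehmod} (with $\Hc=H$ an ordinary Hopf algebra, hence a weak Hopf algebra over the separable base $R=k$), which guarantees every unital associative algebra is admissible. Second, one needs $\Fc$ to be a left exact symmetric $2$-contratrace on $\Mc^H$; left exactness of $\Hom(-,M)^H$ and the symmetric structure coming from the SAYD condition are exactly what was recalled in Section \ref{review} and verified for the explicit $\tau$ above.

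The main obstacle is not the invocation of the adjunction, which is formal once admissibility holds, but rather the verification of the isomorphism of $2$-contratraces $\Hom_H({_{ad}H^\triangle}\ot -,\id)\simeq\Hom_H(-,\widehat{{_{ad}H^\triangle}})$ as an isomorphism compatible with the symmetric structures $\tau$ — i.e.\ that the identification intertwines the respective flip isomorphisms, so that the cyclic (and not merely Hochschild) cohomologies agree. This is the ``straightforward computation'' alluded to in the text: one writes out both symmetric structures explicitly using the antipode and the adjoint action $hx=h^2xS(h^1)$, and checks that the natural map $\widehat{{_{ad}H^\triangle}}\simeq{^*({_{ad}H^\triangle})}$ carries one $\tau$ to the other. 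I would carry this out by the element-wise formulas already displayed for $\tau f(v\ot w)=S^{-1}(w_1)f(w_0\ot v)$ and $\tau\varphi(w\ot v)(h)=\varphi(S^{-1}(h^1)v\ot w)(h^2)$, tracking the coaction of ${_{ad}H^\triangle}$ given by $\triangle$; the SAYD and stability conditions are precisely what make the two expressions coincide.
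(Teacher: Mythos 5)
Your proposal is correct and follows essentially the same route as the paper: the lemma is obtained there exactly as the combination of the adjunction $HC^n(B,\Fc_H)\simeq HC^n(B\rtimes H,\Fc)$ from Section \ref{natadj} (i.e.\ Proposition \ref{dualityadj} with $\Cc=\Mc^H$, $A=H$, admissibility supplied by Lemma \ref{admissiblehmod}) with the displayed identification of $2$-contratraces $\Hom_H({_{ad}H^\triangle}\ot -,\id)\simeq\Hom_H(-,\widehat{{_{ad}H^\triangle}})$. You correctly isolate the only nontrivial verification, namely that this identification respects the symmetric structures $\tau$, which is the ``straightforward computation'' the paper leaves implicit.
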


We will come back to this example later, now let us examine the particular situation of $H=kG$, the group algebra of a discrete group.  We have that $\hmod=Rep(G)$ and $\Mc^H=\vect_G$.  A bit of care has to be taken with the latter as $G$ might be infinite so that the correspondence between $H$-comodules and sheaves on $G$ is via $\Gamma_c(G,-)$ and the convolution product is defined by $V\star W=m_!(V\boxtimes W)$ where $m:G\times G\rightarrow G$ is the group operation.

A SAYD module $\Mc$ here is a sheaf on $G$ that is $G$-equivariant with respect to the adjoint action with the additional assumption that $g\in G$ acts trivially on $\Mc_g$, the fiber of $\Mc$ at $g$.  Note that while as an $H$-module and comodule $M$ is given by $\Gamma_c(G,\Mc)$, we have that $$\widehat{M}\simeq\Gamma(G,\Mc)$$ which is obviously an $H$-module, though not a comodule.  More explicitly, we have $$\Fc_G(V)=\Hom_G(V,\widehat{M})=\{(\varphi_x:V\rightarrow \Mc_x)_{x\in G}|g\varphi_x(g^{-1}v)=\varphi_{gxg^{-1}}(v)\}$$ and the symmetric structure simplifies to $$(\tau\varphi)_x(w\ot v)=\varphi_x(x^{-1}v\ot w).$$

So that if $V\in Rep(G)$ and we consider it and its dual as SAYD's concentrated at $e\in G$ then $$HC^n_{Rep(G)}(A,\Hom_G(V_e\ot -,\id))\simeq HC^n_{\vect_G}(A\rtimes G,\Hom_{\vect_G}(-,V_e^*))$$ where the left hand side is the type $A$ theory and the right hand side is the type $B$ theory. Thus we have:

\begin{lemma}\label{duallemma2}
Let $A$ be a $G$-equivariant algebra then $$HC^n_G(A,V_e)\simeq HC^{n,G}(A\rtimes G, V_e^*).$$
\end{lemma}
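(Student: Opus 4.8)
The final statement (Lemma \ref{duallemma2}) is the concrete $H=kG$ instance of Lemma \ref{duallemma1}, so my plan is to obtain it as a specialization rather than to prove it from scratch. First I would recall that Lemma \ref{duallemma1} gives, for an arbitrary Hopf algebra $H$ and an $H$-module algebra $B$, an isomorphism $HC^n_H(B,{_{ad}H^\triangle})\simeq HC^{n,H}(B\rtimes H,{_{ad}H^\triangle})$, which rests on the $2$-contratrace identity $\Hom_H({_{ad}H^\triangle}\ot -,\id)\simeq\Hom_H(-,\widehat{{_{ad}H^\triangle}})$ coming from the adjunction set up in Proposition \ref{dualityadj} and the natural example of Section \ref{natadj}. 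The substance of the present lemma is thus twofold: to unwind what $\hmod$, $\Mc^H$, $\Fc'$, and $\Fc_H$ become when $H=kG$, and to recognize the two sides as the type $A$ and type $B$ theories of \cite{hkrs} applied to $A$ and to $A\rtimes G$ respectively.

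Next I would carry out the specialization of the coefficient data. For $H=kG$ the category $\hmod=Rep(G)$ and $\Mc^H=\vect_G$, with the convolution product described in the text via $m_!$. Taking $V\in Rep(G)$ and viewing it as a SAYD module $V_e$ concentrated at the identity $e\in G$, I would substitute into the explicit formulas just derived: on the type $A$ side the symmetric $2$-contratrace is $\Hom_G(V_e\ot-,\id)$, while on the comodule side $\widehat{M}\simeq\Gamma(G,\Mc)$ and the paired contratrace becomes $\Hom_{\vect_G}(-,V_e^*)$, with the symmetric structure collapsing to $(\tau\varphi)_x(w\ot v)=\varphi_x(x^{-1}v\ot w)$ as recorded above. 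The key computation is to check that for $M=V_e$ supported at $e$ the dual SAYD is $V_e^*$ and that the hat construction reproduces exactly the type $B$ coefficient object; this is the content of the displayed isomorphism $HC^n_{Rep(G)}(A,\Hom_G(V_e\ot-,\id))\simeq HC^n_{\vect_G}(A\rtimes G,\Hom_{\vect_G}(-,V_e^*))$ preceding the statement.

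Finally, to legitimately invoke the machinery I must verify admissibility, since every $HC^n$ above is defined only for admissible algebras. Here I would appeal to Lemma \ref{admissiblehmod} (with $\Hc=kG$ a genuine, hence weak, Hopf algebra, and the base $R=k$ trivially separable), which guarantees that \emph{any} unital associative $A$ is admissible as an object of $\bimod_\Cc A$; this is precisely the hypothesis needed so that Proposition \ref{dualityadj} and the adjunction of Section \ref{natadj} apply without assuming semisimplicity of $Rep(G)$. With admissibility secured, the isomorphism of the two cyclic cohomologies follows from the isomorphism of the underlying precocyclic objects induced by $\tau_{q,p\circ A^{\circ n}}$, exactly as in the proof of Lemma \ref{duallemma1}.

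\textbf{The main obstacle.} The genuinely delicate point is the identification of the hat-transformed coefficient $\widehat{V_e}$ with the type $B$ object $V_e^*$ on $\vect_G$, i.e.\ matching the symmetric structures on the two sides. Abstractly the isomorphism is forced by the adjunction, but checking that the induced flip $\tau$ on $\Hom_{\vect_G}(-,V_e^*)$ agrees, under the sheaf-theoretic dictionary $M\leftrightarrow\Gamma_c(G,\Mc)$ and $\widehat M\leftrightarrow\Gamma(G,\Mc)$, with the explicit formula $(\tau\varphi)_x(w\ot v)=\varphi_x(x^{-1}v\ot w)$ requires tracking the antipode $S^{-1}$ and the coaction through the convolution product; the $G$-infinite case, where one must distinguish $\Gamma_c$ from $\Gamma$, is where the bookkeeping is least automatic. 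Everything else is a direct transcription of the general $H$ result.
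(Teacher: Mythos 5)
Your proposal follows the paper's own route: Lemma \ref{duallemma2} is obtained there exactly as you describe, by applying the adjunction of Section \ref{natadj} (Proposition \ref{dualityadj} with $\Cc=\Mc^{kG}=\vect_G$, $\Cc^\vee\simeq Rep(G)$), identifying the transferred contratrace $\Fc_H(-)=\Hom_H(-,\widehat{M})$ with $\widehat{M}\simeq\Gamma(G,\Mc)$, specializing to $M=V_e^*$ concentrated at $e$ so that $\widehat{V_e^*}\simeq V^*$ and $\Hom_G(-,V^*)\simeq\Hom_G(V_e\ot-,\id)$, and invoking Lemma \ref{admissiblehmod} for admissibility. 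One correction to your framing: Lemma \ref{duallemma2} is \emph{not} the $H=kG$ instance of Lemma \ref{duallemma1} --- that lemma concerns the fixed coefficient ${_{ad}H^\triangle}$, which for $H=kG$ is the sheaf supported on all of $G$ with the adjoint action (this specialization is rather Lemma \ref{duallemma3} for that particular $\Mc$), whereas here the coefficient is concentrated at $e$; both lemmas are parallel instances of the same general adjunction, and since your actual steps rely on that general machinery rather than on Lemma \ref{duallemma1} itself, the argument goes through.
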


We will come back to this in Section \ref{derhamsection}.

More generally, let $\Mc^\vee=((-)^{-1*}\Mc)^*$, i.e., $\Mc^\vee_g=\Mc^*_{g^{-1}}$.  We see that $\Mc^\vee$ is an SAYD as well and one has an immediate generalization of Lemma \ref{duallemma2}.

\begin{lemma}\label{duallemma3}
Let $A$ be a $G$-equivariant algebra then $$HC^n_G(A,\Gamma_c(G,\Mc))\simeq HC^{n,G}(A\rtimes G, \Gamma_c(G,\Mc^\vee)).$$
\end{lemma}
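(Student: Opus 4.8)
The plan is to reduce the statement to an identification of symmetric $2$-contratraces, exactly as in the proof of Lemma \ref{duallemma2}, and then to carry out that identification for a general SAYD sheaf rather than a skyscraper at $e$. Concretely, I would apply the adjunction of Section \ref{natadj} (a natural instance of Proposition \ref{dualityadj}) with $H=kG$, $B=A$, and type $B$ coefficient $M=\Gamma_c(G,\Mc^\vee)$, which gives
$$HC^n_{Rep(G)}(A,\Fc_G)\simeq HC^{n,G}(A\rtimes G,\Gamma_c(G,\Mc^\vee)),$$
where $\Fc_G(V)=\Hom_G(V,\widehat{M})$ and $\widehat{M}=\Gamma(G,\Mc^\vee)$. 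The right-hand side is already the desired type $B$ cohomology, so everything comes down to producing an isomorphism of symmetric $2$-contratraces between $\Fc_G$ and the type $A$ contratrace $\Hom_G(\Gamma_c(G,\Mc)\ot -,\id)$ attached to the SAYD $\Gamma_c(G,\Mc)$.

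Before matching contratraces I would verify the parenthetical claim that $\Mc^\vee$ is itself an SAYD. Since the adjoint action commutes with inversion, the pullback $(-)^{-1*}\Mc$ is again adjoint-equivariant, and the fiberwise dual carries the contragredient equivariant structure; this produces the equivariant sheaf $\Mc^\vee$ with $\Mc^\vee_g=\Mc^*_{g^{-1}}$. For stability one notes that $g$ acting on $\Mc^\vee_g=\Mc^*_{g^{-1}}$ is, by the inverse twist, the transpose of $g^{-1}$ acting on $\Mc_{g^{-1}}$; the latter is the identity by stability of $\Mc$, so $g$ acts trivially on $\Mc^\vee_g$ as required.

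The heart of the argument is the contratrace isomorphism. Using the explicit description recalled above, $\Fc_G(V)$ consists of families $(\varphi_x:V\to\Mc^\vee_x)_{x\in G}=(\varphi_x:V\to\Mc^*_{x^{-1}})_{x}$ subject to $g\varphi_x(g^{-1}v)=\varphi_{gxg^{-1}}(v)$. On the type $A$ side, decomposing $\Gamma_c(G,\Mc)=\bigoplus_{x}\Mc_x$ identifies $\Hom_G(\Gamma_c(G,\Mc)\ot V,\id)$ with families $(\chi_x:V\to\Mc^*_x)_{x}$ subject to the dual of the $G$-invariance condition. I would then set $\chi_y=\varphi_{y^{-1}}$, i.e. reindex by inversion, and check that the two equivariance constraints coincide; the stability condition on $\Mc$ is exactly what is needed for the reindexed constraint to carry no residual scalar. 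Finally I would match the symmetric structures: $\Fc_G$ carries $(\tau\varphi)_x(w\ot v)=\varphi_x(x^{-1}v\ot w)$, and I would compute the symmetric structure of the type $A$ contratrace from the SAYD structure of $\Gamma_c(G,\Mc)$ and verify that it transports to the same formula under $x\mapsto x^{-1}$. Lemma \ref{duallemma2} is then the special case where $\Mc$ is the skyscraper sheaf with stalk $V$ at $e$.

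The main obstacle is this last step: the underlying functors are matched by a bookkeeping reindexing, but agreement of the symmetric (cyclic) structures is where the content lies, since it is the $\tau$'s that distinguish cyclic from merely Hochschild data. One must track the contragredient action defining $\Mc^\vee$ and the factor $x^{-1}$ in $(\tau\varphi)_x$ with care, and it is precisely the stability hypothesis that makes these conspire. A secondary technical point, when $G$ is infinite, is keeping the distinction between $\Gamma_c$ and $\Gamma$ consistent with the finite-support decomposition used on the type $A$ side; this is harmless here because the relevant pairing only ever involves finitely supported sections against a single $V$.
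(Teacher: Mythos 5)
Your proposal is correct and follows the same route the paper intends: the paper derives Lemma \ref{duallemma3} as an ``immediate generalization'' of Lemma \ref{duallemma2}, i.e.\ by the adjunction of Section \ref{natadj} with type $B$ coefficient $\Gamma_c(G,\Mc^\vee)$, the identification $\widehat{M}\simeq\Gamma(G,\Mc^\vee)$, and the explicit description of $\Fc_G$ and its symmetric structure, matched to the type $A$ contratrace of $\Gamma_c(G,\Mc)$ by the inversion reindexing $x\mapsto x^{-1}$. Your write-up simply makes explicit the equivariance and $\tau$-compatibility checks (and the SAYD property of $\Mc^\vee$) that the paper leaves to the reader.
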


The case of the modular pair in involution (MPI) follows easily from the above.  Let $\chi$ be a character of $G$ and $x\in Z(G)$ such that $\chi(x)=1$ then both $k_{\chi,x}$ and $k_{-\chi,x^{-1}}$ are MPIs and $$HC^n_G(A,k_{\chi,x})\simeq HC^{n,G}(A\rtimes G, k_{-\chi,x^{-1}}).$$

\begin{remark}\label{diff}\rm{
If we apply the above considerations to the case of a discrete but infinite $G$ (we can keep the example $G=\mathbb{Z}$ in mind), then we obtain an indication that the definition we use in this paper is not only different, but has its uses.  More precisely, we observe that $HC^\bullet_{G, old}(k,k_e)\simeq HC^\bullet(k)$ while since $\vect_G$ is semi-simple: $$HC^\bullet_G(k,k_e)\simeq HC^{\bullet,G}(kG,k_e)\simeq HC_{old}^{\bullet,G}(kG,k_e)\simeq HC^\bullet(kBG)$$ and the latter is isomorphic to $H^\bullet(G,k)\ot HC^\bullet(k)$ by the Karoubi's theorem \cite{weibel}.  Note that $H^\bullet(\mathbb{Z},k)\simeq Ext^\bullet_{k[x,x^{-1}]}(k_1,k_1)\simeq k\oplus k[-1]$ thus showing that $HC^\bullet_{\mathbb{Z}, old}(k,k_e)$ is different from $HC^\bullet_\mathbb{Z}(k,k_e)$.
}\end{remark}

\subsection{Cyclic cohomology and monoidal functors}
We have seen that given an algebra $A\in\Cc$ and considering  $\bimod_\Cc A$ and $\Cc$, algebras move to the right and contratraces to the left creating an adjunction that preserves cyclic cohomology.  We now examine what happens in the presence of a monoidal functor between two categories.

\begin{lemma}\label{adj}
Let $\Cc$ and $\Dc$ be monoidal categories and $F:\Cc\rightarrow \Dc$ be a monoidal functor with an exact right adjoint $\dF$.  Then we have $$F:\bimod_\Cc\rightarrow\bimod_\Dc$$ a split $2$-functor induced by the original $F$.
\end{lemma}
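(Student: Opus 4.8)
The plan is to first produce the candidate pseudofunctor on objects, $1$-morphisms and $2$-morphisms, then to verify the coherence isomorphisms, and finally to exhibit the exact right adjoints that witness splitness. On objects, strong monoidality of $F$ turns a unital associative algebra $A$ in $\Cc$ into one in $\Dc$: take $F(A)$ with multiplication $F(A)\ot F(A)\simeq F(A\ot A)\xrightarrow{F(m)}F(A)$ and unit $\id\simeq F(\id)\xrightarrow{F(u)}F(A)$. On a $1$-morphism from $B$ to $A$, that is an object of $\bimod_\Cc(A,B)$, the same monoidal constraints equip $F(M)$ with commuting left $F(A)$- and right $F(B)$-actions, so $F(M)\in\bimod_\Dc(F(A),F(B))$; and a bimodule homomorphism $f$ is sent to $F(f)$, which commutes with the transported actions and is therefore again a bimodule homomorphism.

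For pseudofunctoriality the only substantive point is the coherence isomorphism for composition. Since $F$ is a left adjoint it is right exact, and the relative tensor product $S\ot_B T$ is by definition the cokernel of the difference of the two $B$-actions $S\ot B\ot T\rightrightarrows S\ot T$. Applying $F$ and using strong monoidality to identify $F(S\ot B\ot T)\simeq F(S)\ot F(B)\ot F(T)$ and $F(S\ot T)\simeq F(S)\ot F(T)$, right exactness yields a natural isomorphism $F(S\ot_B T)\simeq F(S)\ot_{F(B)}F(T)$. Together with the identification $F(A)\simeq\id_{F(A)}$ of the unit $1$-morphisms, the associativity and unit coherence conditions for these structure isomorphisms descend directly from those of the monoidal functor $F$.

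It remains to prove that $F$ is split, i.e.\ that for each algebra $A$ the induced functor $F_A:\bimod_\Cc(A,A)\to\bimod_\Dc(F(A),F(A))$ on $A$-bimodules has an exact right adjoint. Here I would use that the right adjoint $\dF$ of a strong monoidal functor is canonically lax monoidal, its structure maps being built from the unit $\eta$, the counit, and the constraints of $F$; consequently $\dF$ carries an $F(A)$-bimodule $N$ to a $\dF F(A)$-bimodule $\dF(N)$, and restriction along the algebra map $\eta_A:A\to\dF F(A)$ (which is an algebra map precisely because $\eta$ is a monoidal natural transformation) makes $\dF(N)$ an $A$-bimodule. Define $F_A^*(N)=\dF(N)$ with this structure. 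Exactness is then immediate: the forgetful functors from bimodules to $\Cc$ and to $\Dc$ are exact and reflect exactness, and $\dF$ is exact by hypothesis, so $F_A^*$ is exact.

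The one calculation that requires care, and the step I expect to be the main obstacle, is the adjunction itself: starting from the underlying adjunction $\Hom_\Dc(F(M),N)\simeq\Hom_\Cc(M,\dF(N))$, one must check that a morphism $f:F(M)\to N$ is $F(A)$-bilinear if and only if its adjunct $M\to\dF(N)$ is $A$-bilinear. This is a mate computation in the style of doctrinal adjunction: the compatibility of $\eta$ and the counit with the respective monoidal structures of $F$ and $\dF$ guarantees that each of the left and right actions is transported correctly, and one simply has to track the two-sided version through all the monoidal constraints. Granting this compatibility, $F_A^*$ is right adjoint to $F_A$ and is exact, whence $F$ is split, as claimed.
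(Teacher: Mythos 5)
Your proposal is correct and follows essentially the same route as the paper: both take the right adjoint of $F_A:\bimod_\Cc A\to\bimod_\Dc F(A)$ to be $\dF$ itself, with the $A$-bimodule structure on $\dF(S)$ given by the adjoint of $F(A)\ot F\dF(S)\ot F(A)\to S$ (your description via the lax monoidal structure on $\dF$ and restriction along $\eta_A$ unwinds to exactly this map), and both get exactness for free from the hypothesis on $\dF$. The only difference is presentational: the paper verifies the adjunction by diagram chases showing that the unit and counit are bimodule maps, whereas you invoke the doctrinal-adjunction/mates argument for the hom-set bijection and leave that tracking implicit.
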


\begin{proof}
Recall that we always assume that the $\ot$'s are right exact, so that $\bimod_\Cc$ and $\bimod_\Dc$ are defined.  Since $F$ is monoidal it sends algebras to algebras and since it is right exact, so it preserves tensor products over algebras and so defines a $2$-functor. Now consider the monoidal functor $F:\bimod_\Cc A\rightarrow\bimod_\Dc F(A)$, that simply sends $T$ to $F(T)$.  We claim that it has an exact right adjoint that is just $\dF$, which would automatically be exact.  We need to define $A$-bimodule structure on $\dF(S)$ which is just the adjoint of the map: $$F(A\ot\dF(S)\ot A)\simeq F(A)\ot F\dF(S)\ot F(A)\rightarrow F(A)\ot S\ot F(A)\rightarrow S.$$ It now suffices to check that the maps $F\dF(S)\rightarrow S$ and $T\rightarrow\dF F(T)$ are bimodule maps.

Consider the diagram:
$$\xymatrix{
F(A)\ot F\dF(S)\ot F(A)\ar[r]\ar[d]&F(A)\ot S\ot F(A)\ar[d]\\
F\dF(S)\ar[r]&S\\
}$$ that commutes if the following diagram commutes:
$$\xymatrix{
A\ot  \dF(S)\ot A\ar[r]\ar[d]&\dF(F(A)\ot S\ot F(A))\ar[d]\\
\dF(S)\ar[r]&\dF(S)\\
}$$ which commutes by the definition of the $F(A)$-bimodule structure.

Now consider the diagram below:
$$\xymatrix{
A\ot T\ot A\ar[r]\ar[dd]& A\ot \dF F(T)\ot A\ar[rd]\ar[dd]&\\
& & \dF(F(A)\ot F(T)\ot F(A))\ar[ld]\\
T\ar[r]&\dF F(T) &\\
}$$ where the triangle commutes by definition and the square commutes if the following commutes:
$$\xymatrix{
A\ot T\ot A\ar[r]\ar[d]&A\ot \dF F(T)\ot A\ar[r]&\dF F(A\ot T\ot A)\ar[d]\\
T\ar[rr]& & \dF F(T)\\
}$$ which commutes by naturality.

\end{proof}

\begin{proposition}\label{adjunctionprop1}
Let $F:\Cc\rightarrow\Dc$ be a monoidal functor with an exact right adjoint, $\Fc\in\Zc'_\Dc(\Dc^*)$ left exact and $A\in\Cc$ an admissible unital associative algebra, then $F^*\Fc$ is left exact and $$HC^n_\Cc(A,F^*\Fc)\simeq HC^n_\Dc(F(A), \Fc).$$
\end{proposition}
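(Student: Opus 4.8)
The plan is to deduce this directly from the $2$-categorical adjunction of Proposition \ref{adj2}, once the hypotheses are translated into the language of $\bimod_\Cc$ and $\bimod_\Dc$. Recall that by definition $HC^n_\Cc(A,F^*\Fc)=HC^n(A,(F^*\Fc)_\bullet)$ and $HC^n_\Dc(F(A),\Fc)=HC^n(F(A),\Fc_\bullet)$, where $A$ (resp.\ $F(A)$) is regarded as an object of $\bimod_\Cc$ (resp.\ $\bimod_\Dc$) and $(-)_\bullet$ is the construction sending a left exact contratrace to an element of the restricted center of the associated $2$-category. Lemma \ref{adj} provides a split $2$-functor $F\colon\bimod_\Cc\to\bimod_\Dc$ induced by the original $F$, and Proposition \ref{adj2} applied to this split $2$-functor at the admissible object $x=A$ yields $HC^n(A,F^*\Fc_\bullet)\simeq HC^n(F(A),\Fc_\bullet)$. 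Thus the proposition reduces to two checks: that $F^*\Fc$ is again a left exact symmetric $2$-contratrace, and that the $2$-functorial pullback $F^*\Fc_\bullet$ coincides with $(F^*\Fc)_\bullet$.

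Left exactness is immediate: $F$, being a left adjoint, is right exact, while $\Fc\in Fun(\Dc^{op},\vect)$ is left exact, i.e.\ sends cokernels in $\Dc$ to kernels in $\vect$. Hence $F^*\Fc(-)=\Fc(F(-))$ sends a cokernel in $\Cc$ first to a cokernel in $\Dc$ and then to a kernel in $\vect$, so it is left exact; that it lies in $\Zc'_\Cc\Cc^*$ was already noted in Section \ref{precocyclic}.

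The one genuinely non-formal point, and the step I expect to be the main obstacle, is the identification $F^*\Fc_\bullet\simeq(F^*\Fc)_\bullet$ in $\Zc'_{\bimod_\Cc}(\bimod_\Cc^*)$. I would verify it componentwise. Fix a unital associative $A$ in $\Cc$ and an $A$-bimodule $S$. On one hand $(F^*\Fc)_A(S)$ is the equalizer of $\tau^{F^*\Fc}_{A,S}\circ F^*\Fc(l_a)$ and $F^*\Fc(r_a)$, two maps $\Fc(F(S))\to\Fc(F(S\ot A))$. On the other hand the induced $2$-functor sends $S$ to the $F(A)$-bimodule $F(S)$, so $(F^*\Fc_\bullet)_A(S)=\Fc_{F(A)}(F(S))$ is the equalizer of the two $F(A)$-actions, namely of $\tau^{\Fc}_{F(A),F(S)}\circ\Fc(l)$ and $\Fc(r)$ as maps $\Fc(F(S))\to\Fc(F(S)\ot F(A))$, where $l,r$ are the left and right $F(A)$-actions on $F(S)$. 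Because $F$ is strongly monoidal, under the constraint $F(S\ot A)\simeq F(S)\ot F(A)$ the map $F^*\Fc(l_a)=\Fc(F(l_a))$ is identified with $\Fc(l)$ and $F^*\Fc(r_a)$ with $\Fc(r)$, and the pulled-back symmetric structure $\tau^{F^*\Fc}_{A,S}$ is, by its very construction, $\tau^{\Fc}_{F(A),F(S)}$ precomposed with these same constraints. The two equalizer diagrams therefore coincide, and since $\Fc$ is left exact both equalizers are genuinely computed as such; hence $(F^*\Fc)_A(S)\simeq\Fc_{F(A)}(F(S))$, naturally in $S$. Checking that this identification intertwines the bifunctorial isomorphisms $\tau$ (so that it is an equivalence in the restricted center, not merely objectwise) is routine given the above matching of symmetric structures.

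With these two points in hand the conclusion is immediate. Substituting $F^*\Fc_\bullet\simeq(F^*\Fc)_\bullet$ into the isomorphism of Proposition \ref{adj2} gives $HC^n(A,(F^*\Fc)_\bullet)\simeq HC^n(F(A),\Fc_\bullet)$, which unwinds to the asserted $HC^n_\Cc(A,F^*\Fc)\simeq HC^n_\Dc(F(A),\Fc)$. Admissibility of $F(A)$ in $\bimod_\Dc$, needed for the right-hand side to be defined, is already guaranteed by Lemma \ref{adj1}, which is precisely what powers Proposition \ref{adj2}, so no separate hypothesis on $F(A)$ is required.
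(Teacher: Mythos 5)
Your proposal is correct and follows essentially the same route as the paper: invoke Lemma \ref{adj} to get the split $2$-functor $F:\bimod_\Cc\to\bimod_\Dc$, note that $F^*\Fc=\Fc\circ F$ is left exact because $F$ is right exact, identify $(F^*\Fc)_A\simeq F^*(\Fc_{F(A)})$, and conclude by Proposition \ref{adj2}. The only difference is that you spell out the equalizer comparison behind the identification $(F^*\Fc)_A\simeq F^*(\Fc_{F(A)})$, which the paper merely states as an observation.
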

\begin{proof}
By Lemma \ref{adj} we have a split $2$-functor $F:\bimod_\Cc\rightarrow\bimod_\Dc$.  Note that $F^*\Fc=\Fc\circ F$ is left exact since $F$ is right exact (despite looking wrong, this is not a typo).  Observe that $(F^*\Fc)_A\simeq F^*(\Fc_{F(A)})$ and so the result follows from Proposition \ref{adj2}.
\end{proof}

Thus given a suitable $F:\Cc\rightarrow \Dc$ we again have that algebras move to the right and contratraces to the left creating an adjunction that preserves cyclic cohomology.

\subsection{Some examples}
This presentation provides an alternative point of view to \cite{js}.  Furthermore, see Remark \ref{twist} below for a case, in fact one of the main motivators for this paper, that does not fit into the Hopf algebra framework.

\begin{example}\label{ex1}\rm{
Let $F:\hmod\rightarrow\vect$ be the fiber functor, and note that $\Hom_k(H,-)$ is its exact right adjoint.  Let $\Fc=(-)^*$ be a symmetric $2$-contratrace on $\vect$, which is exact.  It is straightforward to show that $$F^*\Fc\simeq \Hom_H({_m H^{ad}}\ot -,\id),$$ where ${_m H^{ad}}\in{_H SAYD^H}$ is such that the action is via multiplication and the coaction is given by $\rho(h)=h^2\ot h^3S(h^1)$. Thus we have that for an $H$-module algebra $A$, we get $$HC^n_H(A, {_m H^{ad}})\simeq HC^n(A),$$ where the latter is the usual cyclic cohomology of the algebra $A$ with $H$-module structure disregarded.}
\end{example}

More generally, if $K\to H$ is a Hopf algebra morphism then we have an $F:\hmod\to{_K\Mc}$ such that given an $N\in{_K SAYD^K}$ and $\Fc(-)=\Hom_K(N\ot-,\id)$ we see that $$F^*\Fc(-)\simeq\Hom_H(\text{Ind}_K^H N\ot-,\id)$$ where the induction functor for SAYDs is defined in \cite{js}.  The Example \ref{ex1} is obtained from the unit $k\to H$.

\begin{example}\label{fiberexample}\rm{
Let $F:\Mc^H\rightarrow\vect$ be the fiber functor, and note that $-\ot H$ is its exact right adjoint.  Let $\Fc=(-)^*$ be a symmetric $2$-contratrace on $\vect$, which is just as exact as before.  Now we have that $$F^*\Fc\simeq \Hom(-,{_{ad}H^\triangle})^H.$$  Thus we see that for an $H$-comodule algebra $B$, we get $$HC^{n,H}(B, {_{ad}H^\triangle})\simeq HC^n(B).$$

Using Lemma \ref{duallemma1} we can also say that $$HC^n_H(A,{_{ad}H^\triangle})\simeq HC^n(A\rtimes H).$$}
\end{example}

Again, if $K\to H$ is a Hopf algebra morphism then we have an $F:\Mc^K\to\Mc^H$ such that given an $M\in{_H SAYD^H}$ and $\Fc(-)=\Hom(-,M)^H$ we see that $$F^*\Fc(-)\simeq\Hom(-, \text{Res}_K^H M)^K$$ where the restriction functor for SAYDs is defined in \cite{js}.  The first part of the Example \ref{fiberexample} is obtained from the counit $H\to k$.

\begin{remark}\label{twist}\rm{
Let $G$ be a finite group.  We can apply the cyclic cohomology considerations of \cite{hks} and the present paper to the monoidal category $\vect_G^\omega$, see \cite{EGNO} for the details of the definition of the latter.  Roughly speaking, $\vect_G^\omega$ is a twisted version of $\vect_G$ in the sense that the associator has been modified by the $\omega\in H^3(G,k^\times)$.  In fact one thinks of $\omega$ as a $3$-cocycle, with the understanding that   $\vect_G^\omega$ only depends, up to equivalence, on the cohomology class of $\omega$.  In this case, the AYDs are well understood (they coincide with the elements of the center) and they are twisted representations of the inertia groupoid $IG$ of the groupoid $pt/G$  associated to the group $G$.  More precisely, the transgression of $\omega$ is an element $t\omega\in H^2(IG,k^\times)$ and thus defines the required twisting.  The stability condition is the same as in the untwisted case.  Let us denote the SAYDs by $Rep^{t\omega}_0 IG$.

To apply the formalism we need algebras, and one way is to obtain them from a $\pi:G'\to G$ a group morphism such that $\pi^*\omega=0$.  We assume that $G'$ is also finite. Namely, $\pi$ induces a monoidal functor $\pi_*:\vect_{G'}\to\vect_G^\omega$ with an exact right adjoint $\pi^*$.  We also have the functor $\pi^*: Rep_0^{t\omega}IG\to Rep_0 IG'$.  Given an algebra $A\in\vect_{G'}$ such as $kG'$, we obtain an algebra $\pi_* A\in\vect_G^\omega$.  Thus taking an $M\in Rep^{t\omega}_0 IG$ we obtain $$HC^n_{\vect_G^\omega}(\pi_* A, \Hom_{\vect_G^\omega}(-,M))\simeq HC^{n,G'}(A,\pi^* M).$$  Note that if $\omega=0$ then the functor $\pi^*:Rep_0 IG\to Rep_0 IG'$ coincides with $\text{Res}^{kG}_{kG'}$ of \cite{js}.

}\end{remark}

\subsection{A more unified perspective}\label{adjunctionsection}
Let $\Cc$ and $\Dc$ be monoidal categories, and suppose that we are given a split $2$-functor $F:\bimod_\Cc\rightarrow\bimod_\Dc$.  Let $\Fc_\bullet$ be a symmetric $2$-contratrace on $\bimod_\Dc$ such that for all $B\in\bimod_\Dc$, we have that $\Fc_B$ is left exact.  It is not hard to show that any such data is equivalent to a monoidal functor $F:\Cc\rightarrow\bimod_\Dc A$, for some $A\in\Dc$ with an existing exact $\dF$ and an $\Fc\in\Zc'_\Dc(\Dc^*)$ that is left exact.  We mention this only in passing, as motivation for restricting ourselves to the following.

\begin{definition}
We say that a $(\Cc,\Dc)$-bimodule category $\Nc$ is admissible if it is equivalent to $A_\Dc\md$ as a right $\Dc$-module category and the resulting monoidal functor $\Cc\rightarrow\bimod_\Dc A$ admits an exact right adjoint.
\end{definition}

Note that the above notion is not symmetric in general in $\Cc$ and $\Dc$.  We should instead think of $\Nc$ as a $1$-morphism from $\Cc$ to $\Dc$ in the $3$-category of monoidal categories thus defined, a sort of higher analogue of $\bimod_\vect$.

\begin{remark}\rm{
If $\Nc$ is a $(\Cc,\Dc)$-bimodule category and $\Mc$ is a $(\Dc,\Ec)$-bimodule category then the existence of $\Nc\boxtimes_\Dc\Mc$ is a subtle question and the object is not in general defined. However if both are admissible, then we see that by interpreting the bimodules $\Nc$ and $\Mc$ as maps from $\bimod_\Cc$ to $\bimod_\Dc$, and from $\bimod_\Dc$ to $\bimod_\Ec$ respectively, that the composition is defined and is a map from $\bimod_\Cc$ to $\bimod_\Ec$.  Unraveling, we have an explicit construction of an admissible $(\Cc,\Ec)$-bimodule category $\Nc\boxtimes_\Dc\Mc$.  More precisely, let $\Nc=A_\Dc\md$, $\Mc=A'_\Ec\md$, and $F:\Dc\rightarrow\bimod_\Ec A'$.  Consider $F(A)$ as an algebra in $\Ec$ and let $$\Nc\boxtimes_\Dc\Mc=F(A)_\Ec\md$$ with $\Cc\rightarrow\bimod_\Ec F(A)$ given by $$\Cc\rightarrow\bimod_\Dc A\rightarrow \bimod_{\bimod_\Ec A'}F(A)\simeq\bimod_\Ec F(A)$$ where the second functor exists and has an exact right adjoint by the proof of Lemma \ref{adj}.
}\end{remark}

Combining Proposition \ref{adjunctionprop1} with Proposition \ref{dualityadj} we observe that $\Nc$ induces $\Nc^*(-)$ on left exact $\Fc\in\Zc'_\Dc(\Dc^*)$ and $\Nc_*(-)$ on algebras in $\Cc$ such that we have the following:

\begin{theorem}\label{sl}
Let $\Nc$ be an admissible $(\Cc,\Dc)$-bimodule, $\Fc\in\Zc'_\Dc(\Dc^*)$ left exact, and $A$ an admissible algebra in $\Cc$, then $$HC_\Cc^n(A,\Nc^*\Fc)\simeq HC^n_\Dc(\Nc_* A,\Fc).$$
\end{theorem}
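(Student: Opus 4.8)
The plan is to unwind the definition of admissibility and then chain together the two adjunction results that have already been established. By definition, an admissible $(\Cc,\Dc)$-bimodule category $\Nc$ is equivalent to $A_\Dc\md$ as a right $\Dc$-module category, for some unital associative algebra $A\in\Dc$, together with the data of a monoidal functor $G:\Cc\rightarrow\bimod_\Dc A$ that admits an exact right adjoint. First I would make explicit what $\Nc^*$ and $\Nc_*$ are in these terms: the functor $\Nc_*$ sends an algebra $B$ in $\Cc$ to the algebra $G(B)$ in $\bimod_\Dc A$, and $\Nc^*$ sends a left exact $\Fc\in\Zc'_\Dc(\Dc^*)$ first to $\Fc_A\in\Zc'_{\bimod_\Dc A}((\bimod_\Dc A)^*)$ (via the equalizer construction of Section 4.3) and then pulls it back along $G$ to land in $\Zc'_\Cc(\Cc^*)$. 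So concretely $\Nc^*\Fc = G^*(\Fc_A)$ and $\Nc_* A = G(A)$, and I would record that both of these preserve left exactness.

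Once the two induced operations are identified, the proof factors as a composition of two isomorphisms. The intermediate category is $\Dc^\vee := \bimod_\Dc A$, equipped with the left exact symmetric $2$-contratrace $\Fc_A$. The first isomorphism applies Proposition \ref{adjunctionprop1} to the monoidal functor $G:\Cc\rightarrow\bimod_\Dc A$, which by hypothesis has an exact right adjoint; with the admissible algebra $A$ (here playing the role of the input algebra in $\Cc$) and the left exact contratrace $\Fc_A$ on $\Dc^\vee$, this yields
\begin{equation*}
HC_\Cc^n(A,G^*(\Fc_A))\simeq HC^n_{\bimod_\Dc A}(G(A),\Fc_A).
\end{equation*}
The second isomorphism is precisely the duality adjunction of Proposition \ref{dualityadj} applied to the algebra $A\in\Dc$ with $\Cc^\vee = \bimod_\Dc A$ and the forgetful $2$-functor $F:\bimod_{\Cc^\vee}\rightarrow\bimod_\Dc$: it gives
\begin{equation*}
HC^n_{\bimod_\Dc A}(G(A),\Fc_A)\simeq HC_\Dc^n(F(G(A)),\Fc),
\end{equation*}
where $F(G(A))$ is $G(A)$ with its $A$-bimodule structure forgotten, i.e. the algebra $\Nc_* A$ viewed in $\Dc$. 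Splicing the two isomorphisms together produces the desired identity.

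The main obstacle I anticipate is bookkeeping rather than a deep structural difficulty: I must verify that the admissibility hypotheses needed by each of the two cited propositions are actually in force, and that the coefficient objects match up on the nose under the various pullbacks. Specifically, Proposition \ref{adjunctionprop1} requires the algebra $A$ to be admissible in $\Cc$, which is given, and requires $\Fc_A$ to be a left exact element of $\Zc'_{\bimod_\Dc A}((\bimod_\Dc A)^*)$, which follows from the left exactness of $\Fc$ together with the construction recalled in Section 4.3; meanwhile Proposition \ref{dualityadj} requires $G(A)$ to be admissible as an algebra in $\Cc^\vee$, and this admissibility must be shown to be transported correctly along the equivalence of module categories so that it is compatible with the admissibility of $A$ already assumed. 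The one identity that genuinely needs checking is the compatibility $G^*(\Fc_A)\simeq \Nc^*\Fc$ as objects of $\Zc'_\Cc(\Cc^*)$, which is essentially the definitional content of how $\Nc$ acts on contratraces, so I would treat it as holding by construction. Assembling these verifications, the theorem follows as the announced composition of Proposition \ref{adjunctionprop1} and Proposition \ref{dualityadj}.
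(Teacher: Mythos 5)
Your proposal is correct and follows essentially the same route as the paper, which states Theorem \ref{sl} as the immediate consequence of combining Proposition \ref{adjunctionprop1} (applied to the monoidal functor $\Cc\rightarrow\bimod_\Dc A$ with exact right adjoint coming from admissibility of $\Nc$) with Proposition \ref{dualityadj} (the forgetful duality adjunction $\bimod_\Dc A\rightsquigarrow\Dc$). Your extra bookkeeping --- identifying $\Nc^*\Fc=G^*(\Fc_A)$ and $\Nc_*A=F(G(A))$, and noting that admissibility of the image algebra is transported by Lemma \ref{adj1} --- is exactly the implicit content of the paper's one-line argument.
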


\begin{example}\label{compositexample}\rm{
Consider the $(\hmod,\vect)$-bimodule $\hmod$ with the obvious actions.  Observe that it is admissible since $\hmod=H_\vect\md$ and the resulting functor $\hmod\rightarrow\bimod_\vect H$ is $V\mapsto V\ot H$ with $x(v\ot h)y=x^1 v\ot x^2 hy$.  It has an exact right adjoint $W\mapsto W_{ad}$, where the left action of $H$ on $W_{ad}$ is as $x\cdot w=x^1 w S(x^2)$.  We see that $$\Nc^*(-)^*\simeq\Hom_H({_{ad}H^\triangle}\ot -,\id)$$ and $$\Nc_* A\simeq A\rtimes H$$ yielding $$HC^n_H(A,{_{ad}H^\triangle})\simeq HC^n(A\rtimes H)$$ again, but this time in one shot.

}\end{example}

The similarity of the above to the considerations of Lemma \ref{duallemma1} and Example \ref{fiberexample} is explained by the observation that the former comes from the admissible $(\hmod,\Mc^H)$-bimodule $\vect$ and the latter from the admissible  $(\Mc^H,\vect)$-bimodule $\vect$.  Now the $(\hmod,\vect)$-bimodule $\hmod$ of the Example \ref{compositexample} is the composition of the two, i.e., we have that as an  $(\hmod,\vect)$-bimodule $$\hmod\simeq \vect\boxtimes_{\Mc^H}\vect.$$

On the other hand the above can be viewed as a special case of the following.  Suppose that $\Cc$ is equipped with an $R$-fiber functor $F$ which generalizes the usual notion of fiber functor by replacing the target $\vect$ with $\bimod_\vect R$.  The usual example of this is the case of weak Hopf algebras (see Section \ref{adjactionsection}).  Now suppose that $F$ admits an exact right adjoint ${^*F}$, so that $R_\vect\md$ is an admissible $(\Cc,\vect)$-bimodule.  Then by Theorem \ref{sl} we get that for an admissible algebra $A\in\Cc$ there is an isomorphism $$HC_\Cc^n(A,\Hom_\Cc(-,{^*F}(R^*)))\simeq HC^n(F(A))$$ since $(-)^*_R\simeq\Hom_{\bimod_\vect R}(-,R^*)$.  Observe that the latter is the standard symmetric $2$-contratrace on $\bimod_\vect R$, i.e., $HH_0(R,-)^*$.

\section{Appendix}
Here we collect some additional considerations that while not directly relevant to the main exposition possess some motivational or explanatory value.

\subsection{Coalgebras}\label{coalgebrasection}

We briefly mention some considerations involved in the coalgebra case.  The first difference is that covariant functors from $\Cc$ to $\vect$ need to be considered in order to obtain a cohomology theory.  Thus we would talk about symmetric $2$-traces.  The requirement for left exactness remains.  The category of $A$-bimodules is replaced with the category of $C$-bicomodules.  Examples of symmetric $2$-traces can be obtained from SAYD modules as before.  Namely, given $N\in{_H SAYD^H}$ we have that $$\Hom_H(N,-)$$ and $$\Hom(\id,N\ot -)^H$$ are left exact symmetric $2$-traces.  Note that neither appears in the usual three Hopf cyclic theory types $A$, $B$, $C$, but something like the former appeared in \cite{ser}.  The ``projective" constructions of Section \ref{precocyclic} should be replaced with the ``injective" analogues.

The purpose of this section  is not to spell out the differences between algebras and coalgebras but rather to make the following observation. Let $A$ be an algebra in $\Cc$ such that $$0\rightarrow \id\rightarrow A\rightarrow A^{\ot 2}$$ is exact, then if $C$ denotes the coalgebra $A\ot A$ in $\Cc^\vee=\bimod_\Cc A$, we have that $\Cc$ is equivalent to the category of $C$ bicomodules in $\Cc^\vee$.  So that in addition to algebras going from $\Cc^\vee$ to $\Cc$ and left exact $2$-contratraces going the other way while preserving cyclic cohomology, we have coalgebras going from  $\Cc$ to $\Cc^\vee$ and left exact $2$-traces going the other way, again preserving cyclic  cohomology.  An example of this is the relationship between $\hmod$ and $\Mc^H$ for a general Hopf algebra $H$.  Recall that if $H$ is finite dimensional then the two are $2$-Morita equivalent, but in general this is the best that we can hope for.

\subsection{Cyclic and de Rham cohomologies with coefficients}\label{derhamsection}

Recall that there exist a series of results \cite{loday} relating cyclic homology of commutative algebras to the de Rham cohomology of the spaces on which these algebras are the algebras of functions.  There are versions both in the smooth and the algebraic settings.  Let us choose the latter for the sake of illustrating some of our motivations.  More precisely, let $A$ be a unital commutative algebra over $k$ of characteristic $0$ such that $V=\text{Spec} A$ is smooth, then we have a functorial identification $$HC_n(A)\simeq\Omega^nA/d\Omega^{n-1}A\oplus\bigoplus_{i\geq 1}H^{n-2i}_{dR}(V).$$  Observe that $HC_n(A)$ is computed from a cyclic object $C_n=A^{\ot n+1}$ \cite{loday}, via the $\lambda$-coinvariants of the Hochschild homological complex associated to $A$.  For our purposes it is more natural to consider the dual situation, i.e., we want to consider $$C^n=\Hom(A^{\ot n+1}, k)$$ which is then a cocyclic object whose cyclic cohomology is obtained via the $\lambda^*$-invariants of the Hochschild cohomological complex associated to $A$.  It is immediate that $$HC^n(A)\simeq HC_n(A)^*$$ and so we get (for $V=\text{Spec}A$) $$HC^n(A)\simeq\text{ker}d^*|_{(\Omega^nA)^*}\oplus\bigoplus_{i\geq 1}H_{n-2i}^{dR}(V),$$ where $H_{n-2i}^{dR}(V)=H^{n-2i}_{dR}(V)^*$.

Let us consider a more general situation, namely we introduce coefficients on both sides.  Let $X$ be a smooth affine variety and let $E$ be a vector bundle on $X$ equipped with a flat connection.  Furthermore, assume that there exists a smooth $G$-principal bundle $\pi:Y\rightarrow X$ with $G$ a finite group and $\pi^*E$ a trivializable flat bundle.  We need $G$ to be finite in order for both $Rep(G)$ and $\vect_G$ to be semisimple.

Let $M=H^0_{dR}(Y,\pi^* E)$, be the space of global flat sections of $\pi^* E$.  We note that $G$ acts on $M$, and $E\simeq Y\times_G M,$ while $\Gamma(X,E) =(M\ot\Oc_Y)^G$ where $\Oc_Y$ denotes the algebra of functions on $Y$.  Furthermore, if $\Omega^i_Y$ denotes $\Omega^i\Oc_Y$ then $$\Gamma(X,E\ot_{\Oc_X}\Omega^i_X)\simeq(M\ot\Omega^i_Y)^G$$ and in fact $$H_{dR}^i(X,E)\simeq (M\ot H_{dR}^i(Y))^G.$$

\begin{remark}\rm{
Conversely, if $M$ is a $G$-representation, and $Y/X$ is a principal $G$-bundle, then $E=Y\times_G M$ is a flat bundle on $X$ to which we can equally well apply these considerations.
}\end{remark}

Let $\Cc=Rep (G)$ be the monoidal category of representations of $G$, not necessarily finite dimensional.  Recall that the center $\Zc(\Cc)$ consists of $G$-equivariant $G$-graded vector spaces, i.e., $N=\oplus N_g$ with $x\in G$ acting thus $x:N_g\rightarrow N_{xgx^{-1}}$.  Let $M$ be the $G$ representation above.  It can be viewed as an element of the center concentrated at $e\in G$.  In fact, $M$ is naturally a SAYD module.  This is not the case generally, but is specific to $Rep (G)$.  More precisely, for $V\in G$ we have ${^*V}\simeq V^*$ thus there is no difference between AYD an YD modules.  So we can form a cocyclic object $$\Hom_{Rep (G)}(M\ot\Oc_{Y}^{\ot\bullet+1}, \id)$$ which computes $HC^n_G(\Oc_Y,M)$, the type $A$ theory.  The following proposition is almost immediate after the above discussion.  The proposition itself is not new, Georgy Sharygin is to be credited with informing us of the existence of the result.

\begin{proposition}\label{cyclicderham}
With the assumptions above, we have a natural identification: $$HC^n_G(\Oc_{Y}, M)\simeq \text{ker}\,d^*|_{(E\ot_{\Oc_X}\Omega^n_X)^*}\oplus\bigoplus_{i\geq 1} H^{dR}_{n-2i}(X,E).$$
\end{proposition}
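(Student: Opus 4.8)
The plan is to reduce the equivariant computation to the classical cyclic-homology/de Rham decomposition for the smooth affine $Y$, and then to descend to $X$ by taking $G$-invariants. First I would record that $HC^n_G(\Oc_{Y},M)$ is, by definition, the cyclic cohomology of the cocyclic $k$-module $C^\bullet=\Hom_{Rep(G)}(M\ot\Oc_{Y}^{\ot\bullet+1},\id)$; since $Rep(G)$ is semisimple the unit $\id=k$ is projective and Proposition \ref{oldvsnew} guarantees this coincides with the new definition. Because $G$ is finite and $\mathrm{char}\,k=0$, the invariants functor $(-)^G$ is exact, so I may write $C^n=\big(\Hom_k(M\ot\Oc_{Y}^{\ot n+1},k)\big)^G=\big(M^*\ot(\Oc_{Y}^{\ot n+1})^*\big)^G$, the last identity using that $M$ is finite dimensional (it is the space of global flat sections of the trivializable bundle $\pi^*E$).

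The key structural point is that, because $M$ sits in the center of $Rep(G)$ concentrated at $e\in G$ (so its coaction is trivial), the cyclic operator coming from the symmetric structure carries no monodromy twist: it is the plain cyclic rotation, while the cofaces, built from the multiplication of $\Oc_{Y}$, do not involve $M$ at all. Hence the cocyclic structure on $C^\bullet$ is exactly $\mathrm{id}_{M^*}$ tensored with the standard cocyclic structure of $\Oc_{Y}$, all taken inside the diagonal $G$-invariants. Using exactness of $(-)^G$ and of $M^*\ot(-)$ to commute them past cohomology, together with $HC^n(\Oc_{Y})\simeq HC_n(\Oc_{Y})^*$, I obtain the clean identification
$$HC^n_G(\Oc_{Y},M)\simeq\big(M^*\ot HC^n(\Oc_{Y})\big)^G\simeq\Big(\big(M\ot HC_n(\Oc_{Y})\big)^G\Big)^*.$$

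Next I would apply the classical decomposition for the smooth affine $Y$ (writing $\Omega^i_Y:=\Omega^i\Oc_Y$),
$$HC_n(\Oc_{Y})\simeq \Omega^n_Y/d\Omega^{n-1}_Y\ \oplus\ \bigoplus_{i\geq1}H^{n-2i}_{dR}(Y),$$
which is functorial and hence an isomorphism of $G$-representations. Tensoring with $M$ and taking $G$-invariants (again exact) splits the answer into two pieces. For the locally free piece I would use the given identifications $(M\ot\Omega^i_Y)^G\simeq\Gamma(X,E\ot_{\Oc_X}\Omega^i_X)$ and the fact that, since $M$ consists of flat sections, the $G$-invariant part of $1\ot d$ is precisely the de Rham differential $d_\nabla$ of the flat bundle $E$; thus $(M\ot\Omega^n_Y/d\Omega^{n-1}_Y)^G\simeq\Gamma(X,E\ot_{\Oc_X}\Omega^n_X)/d_\nabla\Gamma(X,E\ot_{\Oc_X}\Omega^{n-1}_X)$. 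For the second piece I would use $(M\ot H^{n-2i}_{dR}(Y))^G\simeq H^{n-2i}_{dR}(X,E)$. Finally, dualizing once, a cokernel becomes the kernel of the transposed map, producing the first summand $\text{ker}\,d^*|_{(E\ot_{\Oc_X}\Omega^n_X)^*}$, while $H^{n-2i}_{dR}(X,E)^*=H^{dR}_{n-2i}(X,E)$ produces the second, which is precisely the claimed formula.

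I expect the main obstacle to be the second paragraph: verifying rigorously that the SAYD structure of $M$ at $e$ yields the untwisted cyclic operator, so that the coefficient $M^*$ genuinely factors out of the cocyclic structure as a passive $G$-module. Everything else is either exactness of $(-)^G$ (immediate for finite $G$ in characteristic $0$), the already-cited classical decomposition, or the given geometric identifications between the invariants of $M\ot\Omega^\bullet_Y$ and the sections of $E\ot_{\Oc_X}\Omega^\bullet_X$; the only delicate point there is that the decomposition can be chosen $G$-equivariantly and that taking invariants commutes both with the relevant differentials and with the passage to (co)homology.
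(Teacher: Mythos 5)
Your argument is correct and follows essentially the same route as the paper: identify the cocyclic object with $(M^*\ot(\Oc_Y^{\ot\bullet+1})^*)^G$, use exactness of $(-)^G$ and $M^*\ot(-)$ to commute them past cohomology, invoke the classical cyclic/de Rham decomposition for the smooth affine $Y$, and translate the $G$-invariant pieces into the flat-bundle data on $X$. The only differences are cosmetic or additional care on your part (dualizing the homological decomposition at the end rather than quoting its dual form, and explicitly checking that the cyclic operator is untwisted because $M$ is concentrated at $e$ and that the old and new definitions agree by semisimplicity), neither of which changes the substance.
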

\begin{proof}
The proof consists of a chain of isomorphisms:
\begin{align*}
HC^n(\Hom_{Rep (G)}(M\ot\Oc_{Y}^{\ot\bullet+1},\id))&\simeq HC^n(\Hom(M\ot\Oc_{Y}^{\ot\bullet+1}, \id)^G)\\
&\simeq (M^*\ot HC^n(\Oc_{Y}^{\ot\bullet+1}, \id))^G\\
&\simeq (M^*\ot (\text{ker}d^*|_{(\Omega^n_Y)^*}\oplus\bigoplus_{i\geq 1}H_{n-2i}^{dR}(Y)))^G\\
&\simeq (M^*\ot \text{ker}d^*|_{(\Omega^n_Y)^*})^G\oplus\bigoplus_{i\geq 1}(M^*\ot H_{n-2i}^{dR}(Y))^G\\
&\simeq (\text{ker}(Id\ot d)^*|_{(M\ot\Omega^n_Y)^*})^G\oplus\bigoplus_{i\geq 1}(M\ot H^{n-2i}_{dR}(Y))^{*G}\\
&\simeq \text{ker}\,d^*|_{(E\ot_{\Oc_X}\Omega^n_X)^*}\oplus\bigoplus_{i\geq 1} H^{dR}_{n-2i}(X,E).
\end{align*}
\end{proof}

So we see that Hopf cyclic cohomology with coefficients may encode the de Rham cohomology of flat vector bundles under some conditions.  However this raises a natural question: Is this the only way to encode the data of a flat vector bundle $E$ on $X$?  After all, $Rep (G)$ is dual to $\vect_G$, the monoidal category of $G$-graded vector spaces with the convolution product.  These are dual via their action on $\vect$, or equivalently via the fact that they are representation categories of dual Hopf algebras: the group algebra $kG$ and the function algebra $\Oc_G$.

Let $\Cc^\vee=\vect_G$. Consider the algebra $\Oc_Y\rtimes G$ in $\Cc^\vee$ and the dual $M^*$ of the $M$ above, now considered as an SAYD for $\Cc^\vee$.  We can again form a cocyclic object $$\Hom_{\vect_G}((\Oc_Y\rtimes G)^{\ot\bullet+1}, M^*)$$ which computes $HC^{n,G}(\Oc_Y\rtimes G,M^*)$, i.e., the type $B$ theory.

The answer is given by Lemma \ref{duallemma2} and it is
$$HC^{n,G}(\Oc_Y\rtimes G,M^*)\simeq HC^n_G(\Oc_{Y}, M),$$ where the latter encodes $H^\bullet_{dR}(X,E)$ as we've seen in Proposition \ref{cyclicderham}.

\subsection{Adjoint action}\label{adjactionsection}
Here we briefly sketch out the notion of adjoint action that we feel explains the  contramodule coefficients of \cite{contra}.  Let $\Mc$ be a $\Cc$-bimodule category \cite{EGNO} and suppose that the action has right adjoints, then one may form $\Mc^{op}$, the contragradient $\Cc$-bimodule  defined via the adjoint action as follows:
\begin{align*}
\Hom_{\Mc}(c\cdot  m, m')&= \Hom_{\Mc}(m, m'\ra c),\\
\Hom_{\Mc}(m\cdot c, m')&= \Hom_{\Mc}(m, c\la m').
\end{align*}

Thus $\Mc^{op}$ as a category is the opposite category of $\Mc$ and the action is via $\la$ and $\ra$.  We often consider an expression $c\la m$ as an element of $\Mc$ in which case it is covariant in $m$ and contravariant in $c$.  Note that in the main body of this paper $(-)^{op}$ denotes simply the opposite category.  This is not the case here!

\begin{remark}
Of course if $\Mc$ is a left $\Cc$-module category, then $\Mc^{op}$ is a right $\Cc$-module category and vice versa.  Also note that if $\Mc^{op}$ exists then so does $$(\Mc^{op})^{op}=\Mc$$ where the equality is of either left, right, or $\Cc$-bimodule categories, whichever is applicable.

\end{remark}

\begin{example}\rm{
Let $\Cc$ be a \emph{rigid} monoidal category  and let $\Cc$ act on itself by $c\ot -$ and $-\ot c$. Then for $c'\in\Cc^{op}$ one has $$c'\ra c= {^*c}\ot c', \quad ~~ \text{and} \quad ~~ c\la c'=c'\ot c^*.$$ More generally, if $\Mc$ is a $\Cc$-bimodule category, then $\Mc^{op}$ is defined via $$m\ra c= {^*c}\ot m, \quad ~~ \text{and} \quad ~~ c\la m=m\ot c^*. $$  In the rigid case, even ${^{op}\Mc}$, with the natural definition, exists. }
\end{example}

\begin{example}\label{closed}\rm{
Let $H$ be a Hopf algebra over a field $k$ with an invertible antipode.
Recall that  $\hmod$, the category of left modules (not necessarily finite dimensional) over $H$ has internal Homs. For all $T, W\in \hmod$,  the left internal Hom is defined by \begin{align*}\Hom^l(W, T)&=\Hom_k(W, T),\\ h\cdot \varphi &= h\ps{1}\varphi(S(h\ps{2})-),\end{align*} and the right internal Hom is defined by \begin{align*}\Hom^r(W, T)&=\Hom_k(W, T),\\ \varphi \cdot h&= h\ps{2}\varphi(S^{-1}(h\ps{1})-).\end{align*} More precisely, we have adjunctions for $T, V, W\in \hmod$:
$$\Hom_H(V\ot W, T)=\Hom_H(V, \Hom^l(W, T)),$$ and
$$\Hom_H(W\ot V, T)= \Hom_H(V, \Hom^r(W, T)),$$ so that $$T\ra W= \Hom^r(W, T), \quad ~~ \text{and} \quad ~~ W\la T=\Hom^l(W, T)$$ for $T\in\hmod^{op}$.
}
\end{example}

The above example simply spells out that $\hmod$ is biclosed. Speaking of biclosed categories, there is a simpler example:

\begin{example}\label{commutative}\rm{
Let $R$ be a commutative algebra over $k$. Consider the monoidal category $R\,\md$ of left $R$-modules. Then we have $$\Hom_R(V\ot_R W, T)=\Hom_R(V, \Hom_R(W, T))$$ and since our category is symmetric monoidal, we get $$T\ra_R W= \Hom_R(W, T), \quad ~~ \text{and} \quad ~~ W\la_R T=\Hom_R(W, T)$$ for $T\in R\,\md^{op}$.}
\end{example}

We can, in a way, combine the two examples.  Namely, $\hmod$ comes equipped with a fiber functor, i.e., a monoidal functor (with other properties \cite{EGNO}) from $\hmod$ to $\vect$.  This functor forgets the $H$-module structure and leaves only the underlying vector space. Note that the $op$-structure above is compatible with the $op$-structure in $\vect$, as the underlying vector space of both $T'\ra W$ and $W\la T'$ is $\Hom_k(W, T)$ which is the internal $\Hom$ in $\vect$.  We can consider the case of a weak Hopf algebra $\Hc$ with a separable base $R$. Its distinguishing feature is a fiber functor not to $\vect$, but to $\bimod R$ the monoidal category of $R$-bimodules (called an $R$-fiber functor \cite{EGNO}).  Thus we have the following example of an $op$-structure compatible with the fiber functor.

\begin{example}\label{weakhopf}\rm{
Let $\Hc$ be a weak Hopf algebra with a separable base $R$. Consider the monoidal category of left $\Hc$-modules. Then we have for $W\in{_\Hc\Mc}$ and $T\in{_\Hc\Mc}^{op}$: $$T\ra W= \Hom^r_R(W, T), \quad ~~ \text{and} \quad ~~ W\la T=\Hom^l(W, T)_R$$ with the $\Hc$-action on $\Hom_R(W, T)$ and $\Hom(W, T)_R$ given as in the Example \ref{closed}.}
\end{example}

Observe that when $R=k$ we recover Hopf algebras; when $\Hc=R$ we recover the special case of the Example \ref{commutative} with the commutative $k$-algebra $\Zc(R)$, i.e., the center of the base of $\Hc$. Note that there is a big difference between $R\md$ and $\bimod R$; the former is a few copies of $\vect$ while the latter is essentially $M_n(\vect)$, i.e., the category whose objects are matrices of vector spaces, which is  $2$-Morita equivalent to $\vect$ itself.  The former is useful to keep in mind as a ``non-connected" example that clashes with the intuition developed after dealing with the usual ``connected" examples like the latter.

The point of the above is that $\Cc$ can be considered as its own bimodule category and suppose that $\Cc$ is biclosed, which for us is equivalent to $\Cc^{op}$ being defined. Let $M\in \Zc_\Cc(\Cc^{op})$, then by the results of \cite{hks} the contravariant functor $$\Hom_{\Cc}(-,M)$$ from $\Cc$ to $\vect$ is a $2$-contratrace.  In particular, for every $c\in\Cc$ we obtain an automorphism of $\Hom_{\Cc}(c,M)$ via: $$\Hom_{\Cc}(c,M)\simeq\Hom_{\Cc}(1,M\ra c)\simeq\Hom_{\Cc}(1,c\la M)\simeq\Hom_{\Cc}(c,M).$$  It is immediate that this automorphism is functorial in $c$ and thus by a Yoneda argument is induced by an automorphism  $\sigma_M\in\Hom_{\Cc}(M,M)$ (compare with the map $u_M$ in \cite{js}).  Let us denote by $\Zc'_\Cc(\Cc^{op})$ those objects $M$ in $\Zc_\Cc(\Cc^{op})$ with $\sigma_M=Id$.
Thus if $M\in \Zc'_\Cc(\Cc^{op})$ then $\Hom_{\Cc}(-,M)$ is a symmetric $2$-contratrace;  for the case of $\Cc=\hmod$ this leads to cyclic cohomology with contramodule coefficients.

\begin{remark}\em{
The above observations can be summarized thus: If $\Cc$ is biclosed, then we have a fully faithful embedding $\Zc'_\Cc(\Cc^{op})\rightarrow\Zc'_\Cc(\Cc^*)$ and though there is no natural analogue of $S^2$ of the Hopf case in general, and so no notion of $SAYD(\Cc)$, there is an analogue of SAYD contra-modules, and it is $\Zc'_\Cc(\Cc^{op})$.  Furthermore, if a left exact $\Fc\in \Zc'_\Cc(\Cc^*)$ is representable, then it is representable by a SAYD contramodule, so that the image of   $\Zc'_\Cc(\Cc^{op})$ in $\Zc'_\Cc(\Cc^*)$ consists of representable (automatically left exact) symmetric $2$-contratraces.

}\end{remark}

In addition to the above, we point out that Theorem \ref{sl} can be restated in terms of contramodules.  Let us assume that both $\Cc$ and $\Dc$ are biclosed.  If $F:\Cc\to\Dc$ is a monoidal functor with an exact right adjoint ${^*F}$, then ${^*F}:\Zc'_\Dc(\Dc^{op})\to \Zc'_\Cc(\Cc^{op})$.  Furthermore, if $A$ is an algebra in $\Dc$ and $M\in \Zc'_\Dc(\Dc^{op})$ then $M\ra A\in\Zc'_{\bimod_\Dc A}(\bimod_\Dc A)^{op}$.  Therefore, if $\Nc$ is an admissible $(\Cc,\Dc)$-bimodule then we have $\Nc^*:\Zc'_\Dc(\Dc^{op})\to \Zc'_\Cc(\Cc^{op})$ and the rest of the Theorem \ref{sl} applies.

\begin{remark}\em{
As mentioned above, there is no natural analogue of $S^2$ that yields the monoidal autoequivalence of $\hmod$ and allows the definition of the bimodule category ${^\#\hmod}$, and thus of $\Zc'_{\hmod}({^\#\hmod})$, where the latter are exactly ${_H SAYD^H}$. However, if $\Cc$ is not only biclosed, but the equation $$c\la\id\simeq\id\ra c^\#$$ has a solution defining the autoequivalence $(-)^\#$, then $SAYD(\Cc)$ should be exactly $\Zc'_\Cc(^\#{\Cc})$. In the case when $\Cc$ is rigid, we have $(-)^\#=(-)^{**}$ is a solution.  The functor from SAYD modules to SAYD contramodules is as expected, namely $M\mapsto\id\ra M$.

}\end{remark}


\bigskip

\noindent Department of Mathematics and Statistics,
University of Windsor, 401 Sunset Avenue, Windsor, Ontario N9B 3P4, Canada

\noindent\emph{E-mail address}:
\textbf{ishapiro@uwindsor.ca}


\begin{thebibliography}{9}

\bibitem[BS]{catdual} G. B\"{o}hm, D. Stefan, \emph{
A categorical approach to cyclic duality.}
J. Noncommut. Geom. 6 (2012), no. 3, 481--538.

\bibitem[B]{contra} T. Brzezinski, \emph{Hopf-cyclic homology with contramodule coefficients},  Quantum groups and noncommutative spaces, 1–8,
Aspects Math., E41, Vieweg + Teubner, Wiesbaden, 2011.

\bibitem[CM]{cm}  A. Connes, H. Moscovici, \emph{Cyclic cohomology
and Hopf algebras}, Lett. Math. Phys. 48 (1999), 97--108.



\bibitem[CV]{catex} G. Corti\~{n}as, C. Valqui, \emph{
Excision in bivariant periodic cyclic cohomology: a categorical approach.}
Special issue in honor of Hyman Bass on his seventieth birthday. Part II.
K-Theory 30 (2003), no. 2, 167--201.


\bibitem[EGNO]{EGNO} P. Etingof, Sh. Gelaki, D. Nikshych, and V. Ostrik, \emph{Tensor Categories},  Mathematical Surveys and Monographs, 205. American Mathematical Society, Providence, RI, 2015.

\bibitem[HKS]{hks} M. Hassanzadeh, M. Khalkhali, and I. Shapiro, \emph{Monoidal Categories,  2-Traces, and Cyclic Cohomology}, preprint  arXiv:1602.05441.

\bibitem[HKRS]{hkrs} P. M. Hajac, M. Khalkhali, B. Rangipour, and Y. Sommerh¨auser, \emph{Hopf-cyclic homology and cohomology with coefficients.} C. R. Math. Acad. Sci.Paris 338 (2004), no. 9, 667--672.

\bibitem[JS]{js} P. Jara, D. Stefan, \emph{Hopf-cyclic homology and relative cyclic homology of Hopf-Galois extensions.}
Proc. London Math. Soc. (3) 93 (2006), no. 1, 138--174.

\bibitem[K]{catcoeff} D. Kaledin, \emph{Cyclic homology with coefficients.} Algebra, arithmetic, and geometry: in honor of Yu. I. Manin. Vol. II, 23--47, Progr. Math., 270, Birkhäuser Boston, Inc., Boston, MA, 2009.

\bibitem[KKS]{catadj} N. Kowalzig, U. Kr\"{a}hmer, P. Slevin, \emph{Cyclic homology arising from adjunctions.}
Theory Appl. Categ. 30 (2015), 1067--1095.

\bibitem[Lo]{loday} J. L. Loday, \emph{Cyclic homology}, Grundlehren der mathematischen Wissenschaften Vol. 301, Springer,  (1998).

\bibitem[RS]{ser} B. Rangipour, S. S\"{u}tl\"{u}, \emph{Characteristic classes of foliations via SAYD-twisted cocycles}, J. Noncommut. Geom. 9 (2015), no. 3, 965--998.

\bibitem[S]{catmackey} J. S{\l}omi\'{n}ska, \emph{
Noncommutative Mackey functors and Hopf-cyclic homology.}
K-Theory 37 (2006), no. 4, 379--394.


\bibitem[W]{weibel} C. Weibel, \emph{An introduction to homological algebra.} Cambridge Studies in Advanced Mathematics, 38. Cambridge University Press, Cambridge, (1994).







\end{thebibliography}
\end{document}